\theoremstyle{definition}
\numberwithin{equation}{section}
\newcommand\Spec{\operatorname{Spec}}
\newcommand\pone{\mathbb{P}^1}
\newcommand{\op}{\operatorname}
\newcommand\kz{\mathcal{KZ}_{\kappa}}
\newcommand\tensor{\otimes}
\newcommand\ml{\mathcal{L}}
\newcommand\starr{\bullet}
\newcommand\dlog{\operatorname{dlog}}
\newcommand\mc{\mathcal{C}}
\newcommand\rk{\operatorname{rk}}
\newcommand\frg{\mathfrak{g}}
\newcommand\frh{\mathfrak{h}}
\newcommand\cblock{\mathcal{V}_{\ell}}
\newcommand{\nc}{\newcommand}
\nc{\rnc}{\renewcommand}
\nc{\vv}{\widearrow}
\renewcommand{\l}{\lambda}
\nc{\ov}{\overline}
\nc{\vl}{\vv{\lambda}}
\nc{\vgnl}{\mathbb{V}_{g, n , \vl, \ell}}
\nc{\Q}{\mathbb{Q}}
\nc{\Z}{\mathbb{Z}}
\nc{\C}{\mathbb{C}}
\newcommand{\cb}{\color{blue}} %
\newcommand{\cora}{\color{orange}}
\newcommand{\Najmuddin}[1]{\framebox{\begin{varwidth}{0.9\textwidth}
      Najmuddin: #1 \end{varwidth}}\newline}
\newcommand{\Prakash}[1]{\framebox{\begin{varwidth}{0.9\textwidth}
      Prakash: #1 \end{varwidth}}\newline}
\rnc{\sl}{\mathfrak{sl}}
\newcommand\SL{\operatorname{SL}}
\newcommand{\leto}[1]{\stackrel{#1}{\to}}
\newtheorem{theorem}{Theorem}[section]
\newtheorem{remark}[theorem]{Remark}
\newtheorem{corollary}[theorem]{Corollary}
\newtheorem{question}[theorem]{Question}
\newtheorem{proposition}[theorem]{Proposition}
\newtheorem{lemma}[theorem]{Lemma}
\newtheorem{clm}[theorem]{Claim}
\newtheorem{defi}[theorem]{Definition}
\newtheorem{example}[theorem]{\bf Example}
\begin{document}

\title[Conformal blocks and the KZ connection]{Conformal blocks in genus zero and the KZ connection}
\author{Prakash Belkale and Najmuddin
  Fakhruddin}

\begin{abstract} We survey some recent work on conformal blocks in
  genus zero, focussing on (1) Chern classes, global generation and
  morphisms, and (2) the Knizhnik--Zamolodchikov connection on
  conformal blocks (and invariants), their motivic realisations, and
  unitarity.

\end{abstract}
\maketitle

\vspace{-.5cm}
\begin{flushright}
  {\it To the memory of Alberto Collino}
  \end{flushright}
\vspace{.5cm}
\section{Introduction}

Let $\frg$ be a finite dimensional simple Lie algebra over the field
of complex numbers, $\frh\subset \frg$ a Cartan subalgebra.  Fix a
Cartan decomposition of $\frg$. Let $R=R^+\cup R^- \subseteq \frh^*$
be the set of roots and $\Delta\subset R$ be the set of simple
positive roots. Let $\theta\in R^+$ be the highest root. Let $(\ ,\ )$
be the Killing form normalised so that $(\theta,\theta)=2$ and let
$\Omega$ denote the corresponding (normalised) Casimir element. For a
dominant integral weight $\lambda\in\frh^*$, let $V_{\lambda}$ denote
the corresponding irreducible representation of $\frg$.  A
  dominant weight $\lambda$ is said to be of level
  $\ell\in \Bbb{Z}_{\ge 0}$ if $(\lambda,\theta)\leq \ell$.

 Let { $g,n$ be non-negative integers with $g +n -3\geq 0$} and let
 $\vv{\lambda}=(\lambda_1,\dots,\lambda_n)$ be an $n$-tuple of
  dominant integral weights of $\frg$ of some level $\ell$. The
theory of conformal blocks produces a vector bundle
$\mathbb{V}_{g,n,\vv{\lambda},\ell}$ of conformal blocks on
$\overline{M}_{g,n}$, the moduli stack of { stable} genus $g$ curves with $n$
marked points. The restriction of this vector bundle to ${M}_{g,n}$
carries a projective integrable connection. We will drop $g$ or $\ell$
from the notation if they are clear from the context.

In genus $0$, $\mathbb{V}_{n,\vv{\lambda},\ell}$ is surjected on by the
constant vector bundle with fibres the
coinvariants $$\mathbb{A}(\vv{\lambda})=V(\vv{\lambda})_{\frg}=V(\vv{\lambda})/\frg
V(\vv{\lambda}), $$
where  $V(\vv{\lambda})=V_{\lambda_1}\tensor V_{\lambda_2}\tensor\cdots\tensor V_{\lambda_n}$.  Working over the configuration space of $n$ points on the affine line, the projective connection on  $\mathbb{V}_{n,\vv{\lambda},\ell}$ lifts to the KZ  connection on the space of coinvariants  $\mathbb{A}(\vv{\lambda})$.

\iffalse
\begin{remark}
For convenience, we will define
$\mathbb{V}_{g,n,\vv{\lambda},\ell}=0$ if $\lambda_i$ is not
of level $\ell$ for some $i$ .
\end{remark}
\fi
We survey some of the recent work and pose questions---some of which
are well-known---on the following:
\begin{enumerate}
\item Chern classes of the conformal block bundles in genus $0$, with
  focus on the positivity properties.
\item The monodromy and geometric realization of the KZ connection on
  coinvariants, as well as on conformal blocks (mostly) in genus $0$.
\end{enumerate}

The structure of this article is as follows:

In \S \ref{s:basic} we recall some of the basic properties of
conformal block bundles. Since there exist many surveys on this
material we restrict ourselves to a few statements (with references)
and pose a question about the integral properties of these bundles.

Sections \ref{s:chern} and \ref{s:KZ} can be read independently of
each other. In \S \ref{ss:chern} we survey some of what is known about
the Chern classes of conformal blocks bundles on $\ov{M}_{0,n}$. The
main interest in this is due to the fact that these bundles are
generated by their global sections, hence give rise to morphisms from
$\ov{M}_{0,n}$ to Grassmannians. A detailed understanding of the class
of all morphisms that arise in this way is still missing, but we
discuss various connections to morphisms arising from GIT and other
classical constructions. The best understood case is when
$\frg = \sl_m$ and $\ell = 1$ in which case there is also a connection
to Gromov--Witten theory. While the first Chern classes of these
bundles have been explored to some extent, very little is known about
the higher Chern classes and the higher codimension classes obtained
by performing various algebraic operations on them. In particular, one
may construct basepoint free divisor classes by pushing forward higher
codimension classes constructed in this way from $\ov{M}_{0,m}$, for
$m >n$, via the natural degeneracy maps. Some constructions and
questions about these topics are considered in \S \ref{ss:pos} and \S
\ref{ss:sym}.

In \S \ref{s:KZ} we survey some properties of the KZ connection on
conformal blocks and invariants, focussing mainly on the
geometrization of this connection. This part relies on the
work of Looijenga, Ramadas, Schechtman and Varchenko. The main theme
is that the connection is of Gauss--Manin type, so there is an
underlying motivic sheaf and, in particular, a variation of mixed
Hodge structure. We discuss several open questions about the numerical
invariants of these structures as well as their behaviour under
natural duality and symmetry operations on conformal blocks. We note
that this section, though mostly expository, contains some
improvements and extensions of known results.

\smallskip

We have not attempted to be exhaustive in this survey and have left
out some topics of active research in related areas. These include the
theory of twisted conformal blocks \cite{DC,HK} and a formula for
their dimensions \cite{DM}, as well as sheaves of conformal blocks on
$\overline{M}_{g,n}$ for vertex operator algebras (see the recent survey
\cite{DGT}).

\smallskip

N.F. acknowledges the support of DAE, Government of India, under
Project Identification No. RTI 4001.

\section{Basic properties of conformal block bundles} \label{s:basic}

The bundles of conformal blocks were first defined by Tsuchiya, Yamada
and Ueno in \cite{TUY} (where they were called sheaves of covacua) for
all $g$ using the representation theory of the affine Lie algebra
associated to $\frg$. Their definition used local coordinates, so did
not immediately give rise to bundles on the stacks $\ov{M}_{g,n}$;
coordinate free definitions were later given in \cite{tsuchi},
\cite{sorger} and \cite{loo-wzw} using which one can define the
bundles $\mathbb{V}_{g,n,\vv{\lambda},\ell}$ on $\ov{M}_{g,n}$; see
\cite[\S 2.2]{Fakh} for details. We will not use the explicit
definition of these bundles, so we do not explain it here, but we
recall some of their fundamental properties.  For any dominant weight
$\l$ we denote by $\l^*$ the weight corresponding to the dual of
$V_{\l}$.

\begin{proposition} \label{p:compat} %\marginpar{prop:compat}
$ $
\begin{enumerate}
\item Let $f_i: \ov{M}_{g,n} \to \ov{M}_{g,n-1}$ be the
  morphism given by forgetting the $i$-th marked point.  If $\l_i = 0$
  for some $i$, then $\vgnl \cong f_i^*
  (\mathbb{V}_{g,n,\vl', \ell})$, where $\vl'$ is obtained from $\vl$
  by deleting $\l_i$.
\item Let $ \gamma: \ov{M}_{g_1,n_1+1} \times
  \ov{M}_{g_2,n_2+1} \to \ov{M}_{g_1 + g_2,n_1+n_2}$ be the
  gluing morphism, where we glue along the last marked point for each
  factor. Then  we have an
  isomorphism
  \[
  \gamma^*(\mathbb{V}_{g_1 + g_2,n_1+n_2,\vl, \ell}) \stackrel{\sim}{\to}
  \bigoplus_{\mu} \mathbb{V}_{g_1,n_1+1,\vv{\lambda^1 \mu},\ell} \otimes
  \mathbb{V}_{g_2,n_2 + 1,\vv{\lambda^2\mu^*}, \ell}
  \]
  where $\vv{\l^1\mu} := (\l_1,\dots,\l_{n_1},\mu)$ and $\vv{\l^2  \mu^*}
  := (\l_{n_1+1},\dots,\l_{n_1+n_2},\mu^*)$.
\item Suppose $g > 0$. Let $\gamma:\ov{M}_{g-1,n+2} \to \ov{M}_{g,n}$
  be the gluing morphism where we glue the last two marked points to
  each other. Then we have an isomorphism
  \[
  \gamma^*(\vgnl) \stackrel{\sim}{\to} \bigoplus_{\mu}
  \mathbb{V}_{g-1,n+2, \vv{\l\mu}, \ell}
  \]
  where $\vv{\l\mu} := (\l_1,\dots,\l_n,\mu,\mu^*)$.
\end{enumerate}
(The sums in (2) and (3) are over  all dominant integral weights $\mu$ of level $\ell$ of $\frg$.)
\end{proposition}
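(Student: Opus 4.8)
The plan is to reduce everything to the "propagation of vacua" statement (part (1)) together with the genus-$0$, three-point factorization, which are the two basic structural facts about TUY conformal blocks that one takes as given. Concretely, I would not try to reprove the factorization theorem from scratch (this is the content of \cite{TUY}, recast in the coordinate-free language of \cite{tsuchi,sorger,loo-wzw}); rather I would \emph{state} it as the fundamental input and then derive the three isomorphisms as formal consequences. The key observation is that each of the three morphisms $f_i$ and the two gluing maps $\gamma$ corresponds, on the level of the universal curve, to a degeneration of a smooth pointed curve to a nodal one, and the conformal block sheaf is, by construction, compatible with such degenerations.

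For part (1): the forgetful map $f_i$ factors through the universal curve $\ov{M}_{g,n}\cong\ov{M}_{g,n-1}\times_{\ov{M}_{g,n-1}}\mathcal{C}_{g,n-1}$ (stabilizing as needed), and since $V_{\l_i}=V_0=\C$ is the trivial module when $\l_i=0$, inserting it at the extra point does not change the space of covacua; one checks this is compatible with the connection and patches over the boundary. First I would set up this identification carefully at the level of the universal curve, invoke the "insertion of the vacuum module" lemma pointwise, and then note that both sides are vector bundles on $\ov{M}_{g,n}$ agreeing on a dense open set and with compatible descent data, hence isomorphic. For parts (2) and (3): I would invoke the \textbf{factorization theorem}, which says precisely that if $\pi:\mathcal{C}\to S$ is a family of nodal curves with a section through a single node, then the conformal block sheaf of $\mathcal{C}/S$ restricted to the boundary divisor is naturally isomorphic to $\bigoplus_{\mu}(\text{blocks of the normalization with }\mu,\mu^*\text{ inserted at the two preimages of the node})$, the sum over level-$\ell$ dominant weights $\mu$. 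The gluing morphism $\gamma$ in (2) is exactly the map onto such a boundary divisor in the separating case, and in (3) in the non-separating case; pulling back the conformal block bundle along $\gamma$ and applying the theorem gives the displayed formulas directly, with $\vv{\l^1\mu}$ and $\vv{\l^2\mu^*}$ (resp. $\vv{\l\mu}$) recording the decorations on the normalization.

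The main obstacle — really the only nontrivial point — is justifying that the factorization isomorphism is an isomorphism of \emph{bundles} (i.e.\ that the right-hand side is locally free of the expected rank and the map is an isomorphism, not merely a map or an isomorphism generically), and that it is canonical enough to be stated without reference to coordinates. This is where one genuinely needs the coordinate-free constructions of \cite{tsuchi,sorger,loo-wzw}: one must check that the Kac--Moody module structure used to define covacua near a node is controlled by the two "one-variable" pieces via the embedding of loop algebras coming from the two branches, and that the resulting sewing map is compatible under base change. I would cite \cite[\S 2.2]{Fakh} for the precise statements and indicate that the proof amounts to (a) the local freeness and base-change results for the sheaves $\vgnl$, (b) the sewing/propagation lemmas in representation theory of $\hfrg$, and (c) a check that the finitely many level-$\ell$ weights $\mu$ exhaust the relevant summands (a consequence of the level bound $(\mu,\theta)\le\ell$ forcing the affine Verma-type modules at other weights to contribute zero covacua). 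Each of (a), (b), (c) is standard, so the bulk of a full write-up is bookkeeping rather than new ideas.
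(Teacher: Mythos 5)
Your proposal is correct and matches the paper's treatment: the paper states Proposition \ref{p:compat} without proof, citing exactly the sources you invoke (\cite{TUY} for propagation of vacua and factorization, with the coordinate-free recasting of \cite{tsuchi,sorger,loo-wzw} and the summary in \cite[\S 2.2]{Fakh}), and your reduction to those two inputs plus the bookkeeping points (a)--(c) is the standard argument those references carry out. The only quibble is notational: in part (1) the identification should simply read that $\ov{M}_{g,n}$ is the universal curve over $\ov{M}_{g,n-1}$, not a fibre product of $\ov{M}_{g,n-1}$ with itself.
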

\iffalse
 \Najmuddin{Changed arrows in the above, and throughout the paper:
    now use vv instead of vec. The comment can be removed after you
    have read it.}}
\fi
The proposition reduces the computation of the ranks of the bundles
$\vgnl$ to the case $g=0$ and $n=3$. The Verlinde formula, see
\cite[Section 4]{sorger}, gives an explicit formula for these ranks.

\begin{remark} \label{r:cft}
 An immediate consequence of Proposition \ref{p:compat} is that the
 Chern characters of the bundles $\vgnl$ give rise to a cohomological
 field theory in the sense of \cite[\S 6]{koma}.
\end{remark}

Another basic property of the $\vgnl$ is the existence of the WZW
connection, which was first constructed mathematically in \cite{TUY};
see also \cite{ueno}, \cite{tsuchi}, \cite{sorger} and \cite{loo-wzw}.
\begin{proposition} \label{p:wzw}
  The restriction of $\vgnl$ to $M_{g,n}$ has a natural flat
  projective connection with logarithmic singularities along the boundary.
\end{proposition}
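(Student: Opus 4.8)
The plan is to reduce the construction of the WZW connection to a local statement on an étale (or formal) neighborhood of a point of $M_{g,n}$, trivialize the relevant bundle there using local coordinates, and then exhibit the connection via the action of the Virasoro algebra (the Sugawara construction) on the space of covacua. Since the explicit definition of $\vgnl$ is not reproduced in the text, I would phrase the argument in terms of the data that goes into the coordinate-free definitions of \cite{tsuchi,sorger,loo-wzw}: a family $\pi\colon \mathcal{C}\to S$ of stable curves with sections $\sigma_1,\dots,\sigma_n$, and the bundle whose fiber is the space of coinvariants of the tensor product $V_{\lambda_1}\otimes\cdots\otimes V_{\lambda_n}$ under the Lie algebra $\widehat{\frg}_{\mathcal{C}\setminus\{\sigma_i\}}$ of $\frg$-valued functions regular away from the marked points.

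First I would work locally on $M_{g,n}$, so that the family of curves admits formal coordinates $t_i$ at the sections $\sigma_i$; this trivializes the sheaf of covacua as an $\mathcal{O}_S$-module, because in the presence of coordinates the space of covacua is literally the space of coinvariants, independent of the moduli point. Second, following the Sugawara construction, the Virasoro field $T(z)=\tfrac12\sum_a\colon\!j_a(z)j^a(z)\!\colon$ (normalized by the dual Coxeter number shift $\kappa=\ell+h^\vee$) acts on each representation $V_{\lambda_i}$, and the key compatibility is that the ``diagonal'' action of the $L_{-1}$-type operators, together with the insertion of vector fields on $\mathcal{C}$ coming from Kodaira--Spencer, descends to the space of coinvariants. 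Concretely, for a tangent vector $\theta\in T_sS$ one lifts it via the Kodaira--Spencer map to a collection of meromorphic vector fields near the $\sigma_i$, and defines $\nabla_\theta$ on a covacuum by the sum of the corresponding Sugawara operators; one then checks (i) this is independent of the chosen lift modulo the coinvariant relations, using that globally regular vector fields act by zero on covacua — this is exactly where one needs $2g-2+n>0$ so that $H^0(\mathcal{C},T_{\mathcal{C}}(-\sum\sigma_i))$ controls the ambiguity — and (ii) the change-of-coordinates anomaly in $T(z)$ is a scalar (the central charge times a Schwarzian), which is why the connection is only \emph{projective}. Third, I would verify flatness by the standard computation that the commutator $[\nabla_\theta,\nabla_{\theta'}]-\nabla_{[\theta,\theta']}$ acts by a scalar $2$-form, again a consequence of the Virasoro commutation relations modulo the central term.

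To get the logarithmic singularities along the boundary of $\ov{M}_{g,n}$, I would pass to the Deligne--Mumford compactification and use the factorization isomorphisms of Proposition \ref{p:compat}(2),(3): near a boundary divisor parametrizing a nodal degeneration, the bundle $\vgnl$ is (after pullback along the gluing map) a direct sum of external tensor products of conformal block bundles for the normalization, and the connection on the total space acquires at worst a first-order pole in the smoothing parameter $q$ of the node, with residue given by the operator $L_0^{(\mu)}=\tfrac{(\mu,\mu+2\rho)}{2\kappa}$ on the $\mu$-summand. Checking that this is the precise local form of the connection — i.e.\ that the naive coordinate on the smoothing deformation is the right ``good'' coordinate so the pole is genuinely logarithmic with constant residue — is the main obstacle, and is where I would lean most heavily on the detailed analysis in \cite{TUY} and its coordinate-free reworkings in \cite{tsuchi,sorger,loo-wzw}; the interior construction and its projective flatness are, by contrast, a fairly mechanical application of the Sugawara formalism.
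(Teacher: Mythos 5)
The paper offers no proof of Proposition \ref{p:wzw}, simply citing \cite{TUY} (and \cite{ueno,tsuchi,sorger,loo-wzw}), and your sketch is precisely the standard Sugawara/Virasoro-uniformization construction carried out in those references, so it is essentially the same approach. The outline --- local trivialization of covacua by formal coordinates, Kodaira--Spencer lift of tangent vectors, projectivity from the central-charge anomaly, and logarithmic poles at the boundary with residue the $L_0$-operator acting by $(\mu,\mu+2\rho)/2(\ell+h^{\vee})$ on the $\mu$-summand of the factorization --- is correct as far as it goes, with the hard local analysis at the nodes rightly deferred to \cite{TUY}.
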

The connection is quite explicit, and allows one to compute the
monodromy around each boundary component (which is always semisimple
and of finite order). When $g=0$ this projective connection lifts to a
flat connection on the conformal block bundles called the
Knizhnik--Zamolodchikov (henceforth KZ) connection; we discuss this in
more detail in \S \ref{s:KZ}.

%\smallskip

\begin{question} \label{q:Z}
  The moduli stacks $\ov{M}_{g,n}$ are defined over $\Spec{(\Z)}$ and using the
  split form of $\frg$ defined over $\Q$ one can define all the
  conformal blocks bundles (together with the connections) over $\Spec{(\Q)}$.
  Are there natural models of these bundles defined over $\Spec(\Z)$?
\end{question}

\begin{remark} \label{r:rank}%
  Conformal blocks bundles can be geometrically described in terms of
  spaces of sections of line bundles over suitable stacks of parabolic
  principal bundles on curves, see e.g., \cite{KBook}. Although we
  will not use it explicitly in this paper, this geometric viewpoint
  is useful in proving properties of conformal block bundles.

  These line bundles can be defined in any characteristic and Question
  \ref{q:Z} is closely related to the question whether their spaces of
  global sections have constant rank over all points of
  $\ov{M}_{g,n/\mathrm{Spec}(\Z)}$. See, e.g., \cite{faltings-lg} for
  some relevant constructions and results.
\end{remark}

\section{Chern classes and positivity in genus $0$} \label{s:chern}

\subsection{Chern classes} \label{ss:chern}

The Verlinde formula computes the ranks for the conformal block
bundles $\mathbb{V}_{g,n,\vv{\lambda},\ell}$. To get information about
more global properties of these bundles it is natural to try to
compute the Chern classes of these bundles. This was carried out in
\cite{Fakh} when $g=0$ and $c_1$ was (implicitly) computed for all $g$
there;\footnote{A formula for $c_1$ of the restriction of these
  bundles to $M_{0,n}$ also follows from \cite{tsuchi}.} an explicit
formula for the Chern character for all $g$ was later given in
\cite[Theorem 1]{MOPP}. The method of \cite{Fakh} for $c_1$ depends
only on the residue theorem (for curves) and computation of the
residues of the WZW connection along the boundary divisors, whereas
the method of \cite{MOPP} uses the classification of semi-simple
cohomological field theories from \cite{teleman} (cf. Remark
\ref{r:cft}).

We do not reproduce these general formulas here but to give the reader
the flavour we write down the formula in the case $g = 0$ and $n=4$,
in which case $\ov{M}_{0,4} \cong \pone$. To simplify notation we
denote the rank of the bundle $\mathbb{V}_{0, n, \vv{\l},\ell}$ by
$r_{\vv{\l}}$; the integer $n$ is determined by $\vv{\l}$ and $\ell$
is assumed to be fixed.  For any dominant weight $\l$ we denote
by $c(\l)=(\l,\l+2\rho)$ the value of the Casimir element $\Omega$ acting on
$V_{\l}$, where $\rho$ is half the  sum of positive roots of $\frg$. Then we have the following formula (\cite[Corollary
3.5]{Fakh}):
\begin{proposition} \label{p:4points}
  Let $\frg$ be a simple Lie algebra, $\ell$ a non-negative integer and
  $\vv{\l}$ a $4$-tuple of dominant weights for $\frg$ of level $\ell$.
  Then
    \begin{multline}
\label{eq:mainformula}
\deg(\mathbb{V}_{4, \vv{\l},\ell}) = \frac{1}{2(\ell + h^{\vee})} \times
\biggl\{ \Bigr \{ r_{\vv{\l}} \sum_{i=1}^4 c(\l_i)
\Bigr \} \ -  \\
\Bigl \{ \sum_{\l} c(\l)\bigl \{ r_{(\l_1, \l_2, \l)}
\cdot r_{(\l_3, \l_4, \l^*)} + r_{(\l_1, \l_3, \l)}\cdot r_{(\l_2,
  \l_4, \l^*)} + r_{(\l_1, \l_4, \l)}\cdot r_{(\l_2, \l_3, \l^*)}
\bigr \}
\Bigr \} \biggr \} \ .\\
\end{multline}
 Here $\lambda$ runs through all dominant integral weights of level $\ell$ of $\frg$.
\end{proposition}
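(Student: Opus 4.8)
The plan is to compute $\deg\mathbb{V}_{0,4,\vl,\ell}$ by the residue formula for logarithmic connections on curves, applied to $\ov{M}_{0,4}\cong\pone$, using the Knizhnik--Zamolodchikov connection. Since $\deg\mathbb{V}_{0,4,\vl,\ell}=\deg\det\mathbb{V}_{0,4,\vl,\ell}$, we are computing the degree of a line bundle on $\pone$. By Proposition~\ref{p:wzw} the restriction of $\mathbb{V}_{0,4,\vl,\ell}$ to $M_{0,4}$ carries a flat projective connection with logarithmic poles along the three boundary points of $\ov{M}_{0,4}$; in genus zero this lifts to an honest flat connection $\nabla$ (the KZ connection), still with logarithmic poles there. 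Write the three boundary points as $p_{12},p_{13},p_{14}$, where $p_{1j}$ is the point whose stable curve has one component carrying the marked points $\{1,j\}$ and the other carrying the complementary pair. Then $\det\mathbb{V}_{0,4,\vl,\ell}$ inherits a logarithmic connection on $\pone$ with poles at these three points, and the residue theorem gives
\[
\deg\mathbb{V}_{0,4,\vl,\ell}\;=\;-\sum_{\{i,j\}}\operatorname{tr}\bigl(\Res_{p_{ij}}\nabla\bigr),
\]
the sum running over the three partitions $\{1,2,3,4\}=\{i,j\}\sqcup\{k,l\}$ into pairs.

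Next I would compute the residues. Near $p_{ij}$, Proposition~\ref{p:compat}(2), applied with $g_1=g_2=0$ and $n_1=n_2=2$ (so that both factors $\ov{M}_{0,3}$ are points), decomposes the fibre of $\mathbb{V}_{0,4,\vl,\ell}$ as
\[
\bigoplus_{\mu}\ \mathbb{V}_{0,3,(\l_i,\l_j,\mu),\ell}\otimes\mathbb{V}_{0,3,(\l_k,\l_l,\mu^*),\ell},
\]
with $\mu$ running over dominant weights of level $\ell$ and the $\mu$-summand of dimension $r_{(\l_i,\l_j,\mu)}\,r_{(\l_k,\l_l,\mu^*)}$. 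The residue $\Res_{p_{ij}}\nabla$ preserves this decomposition and acts on the $\mu$-summand by a scalar. Writing $\Delta_\nu:=c(\nu)/\bigl(2(\ell+h^{\vee})\bigr)$ for the conformal weight attached to $\nu$, the explicit local form of the WZW/KZ connection from \cite{TUY} identifies this scalar as $\Delta_\mu-\tfrac13\sum_{m=1}^4\Delta_{\l_m}$: the conformal weight $\Delta_\mu$ of the gluing weight at the node, corrected by a term built from the conformal weights of all four marked points (the same correction at each of the three boundary points).

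Finally I would assemble the pieces. Substituting, and using that $\sum_\mu r_{(\l_i,\l_j,\mu)}\,r_{(\l_k,\l_l,\mu^*)}=r_{\vl}$ for each of the three partitions (again Proposition~\ref{p:compat}(2), now read on ranks), the correction terms contribute $\sum_{\{i,j\}}\tfrac13\bigl(\sum_m\Delta_{\l_m}\bigr)r_{\vl}=r_{\vl}\sum_m\Delta_{\l_m}$ to $\deg\mathbb{V}_{0,4,\vl,\ell}$, while the $\Delta_\mu$ terms contribute $-\sum_\mu\Delta_\mu\bigl(r_{(\l_1,\l_2,\mu)}r_{(\l_3,\l_4,\mu^*)}+r_{(\l_1,\l_3,\mu)}r_{(\l_2,\l_4,\mu^*)}+r_{(\l_1,\l_4,\mu)}r_{(\l_2,\l_3,\mu^*)}\bigr)$. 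Multiplying out $\Delta_\nu=c(\nu)/\bigl(2(\ell+h^{\vee})\bigr)$ yields exactly \eqref{eq:mainformula}. (As a sanity check, for $\frg=\sl_2$, $\ell=1$ and $\vl$ the $4$-tuple of copies of the fundamental weight the right side of \eqref{eq:mainformula} equals $1$, matching $\mathbb{V}_{0,4,\vl,1}\cong\mathcal{O}_{\pone}(1)$.)

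The hard part is the residue computation of the second step. Propositions~\ref{p:compat} and~\ref{p:wzw} supply the decomposition of the nearby fibre and the existence and logarithmic nature of the connection, but not the residue eigenvalues, nor the normalisation pinning down the KZ connection among its projective equivalents --- and it is precisely this normalisation that produces the marked-point conformal weights in the formula. Getting it right requires the explicit local description of the WZW connection from \cite{TUY}; equivalently, one must understand how the KZ connection on the coinvariants $\mathbb{A}(\vl)$ over the configuration space of points on $\mathbb{A}^1$ descends to $M_{0,4}$, where the rescaling twist --- controlled by the identity $\sum_{i<j}\Omega_{ij}=-\tfrac12\sum_i c(\l_i)$ on $\mathbb{A}(\vl)$ --- together with the insertion at $\infty$ yields the corrections $-\tfrac13\sum_m\Delta_{\l_m}$.
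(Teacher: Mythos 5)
Your proposal is correct and follows essentially the same route as the paper, which does not reprove the formula but cites \cite[Corollary 3.5]{Fakh} and describes that proof exactly as you do: the residue theorem for the logarithmic WZW/KZ connection on $\ov{M}_{0,4}\cong\pone$, combined with the factorisation of Proposition \ref{p:compat}(2) and the computation of the residues at the three boundary points. You also correctly identify the one nontrivial input --- the residue eigenvalues on each $\mu$-summand, which require the explicit local form of the connection from \cite{TUY} and the normalisation of its extension across the boundary --- and your stated eigenvalues are consistent with the trace identity and reproduce \eqref{eq:mainformula}.
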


Proposition \ref{p:compat} and the fact that $CH_1(\ov{M}_{0,n})$ is
generated by the classes of its $1$-dimensional strata (see
\cite{keel}) implies that \eqref{eq:mainformula}
determines the first Chern class when $g=0$; for general $g$ one also
needs a formula for the degree of conformal block bundles on
$\ov{M}_{1,1}$ (\cite[Theorem 6.1]{Fakh}).

\begin{example} \label{ex:sl2} Let $\frg = \sl_2$, let  $\ell= 1$ and
  let each $\l_i$, $i=1,2,3,4$, be the fundamental weight. In this
  case the formula \eqref{eq:mainformula} implies that the degree is
  equal to $1$.
\end{example}

In \cite[Lemma 2.5]{Fakh} it was observed that when $g=0$ all the
bundles $\mathbb{V}_{g,n,\vv{\lambda},\ell}$ are generated by their
global sections. Therefore, these bundles give rise to morphisms from
$\overline{M}_{0,n}$ to projective spaces (using their first Chern
class) or, in general, to Grassmannians. The fact that these morphisms
are interesting, e.g., not just constant, follows from the
computations of the Chern classes (see Example \ref{ex:sl2}).  On the
other hand, for $g>0$ it was observed in \cite[\S 6.1]{Fakh} that the
conformal block bundles are usually not even nef and so are perhaps
not very useful as a tool for investigating the geometry of
$\overline{M}_{g,n}$.

% Chern classes of the conformal blocks
% $\mathbb{V}_{g,n,\vv{\lambda},\ell}$ have now been computed in all genera
% $g$.

We also remark that for $g=0$ and fixed $n$ and $\vv{\lambda}$, the
bundles are trivial for large $\ell$ and so the positivity properties
of these bundles range from being ample to being trivial, but in
general it is not known when exactly either of these cases occur (even
if $n=4$).

\begin{remark}
It has been shown in \cite{DamGib} that global generation may fail even in genus zero for the wider class of conformal blocks given by (appropriate)  VOAs.
\end{remark}

Two special cases have been studied in some detail: The first Chern
classes of level one conformal block bundles for $\sl_n$ and the first
Chern classes of conformal block bundles for $\sl_2$
\cite{Fakh,AGS,AGSS,BoGi,GG,GJMS}. The numerical computations were
carried out in many of these papers using Swinarski's software package
\cite{S}. The first Chern classes of level one conformal block bundles
on $\sl_n$ arise in many different geometric settings (see, e.g.,
\cite[\S 3]{fed-new}) and they can be
considered the best studied class of conformal block divisors.
%{\cora Broken link to ``fed-new''}
% For
% example, \cite[Theorem A]{Fedo} gives a description of the subclass of
% symmetric divisors using cyclic coverings of pointed rational
% curves.
On the other hand, a large class of basepoint free divisors
on $\ov{M}_{0,n}$ is constructed in \cite{BG2} using the Gromov--Witten
theory of projective homogenous spaces, and in the special case of
projective space these classes are identified with level one conformal
block divisor for $\sl_n$, \cite[Theorem 3.1]{BG2}.

We now { briefly} describe some of the known results about the
level $1$ classes, mostly for the $\sl_m$, and also some results for
$\sl_2$ classes at higher levels.

\subsubsection{}

Level one conformal block bundles for $\sl_2$ give a natural,
$S_n$-equivariant basis for the Picard group of $\ov{M}_{0,n}$
consisting of basepoint free line bundles (\cite[Theorem
4.3]{Fakh}). The basic case is when $n$ is even and $\vl$ is given by
$\lambda_i = \varpi$, the fundamental weight, for all $i$. The first
Chern classes of the bundles with the same $\vl$, but for all $\ell$,
have been studied in \cite{AGS}; in particular, for
$\ell = 1,2,g-1,g$, where $n = 2g+2$, they show that the morphisms
corresponding to these divisors have natural geometric
descriptions. For example, when $\ell = 1$ the line bundle is pulled back
from the Satake compactification $A_g^*$ of $A_g$ (the moduli stack of
principally polarised abelian varieties of dimension $g$) via the
morphism given by taking the double cover of the genus $0$ curve
ramified along the marked points and then composing with the Torelli
map (\cite[Theorem 6.2]{AGS}).

\subsubsection{}

Continuing with $\frg = \sl_2$, but now letting $\vl$ be arbitrary
with all $\l_i \neq 0$, we define the associated critical level to be
$(\sum_i \l_i/2)-1 $   using the identification of dominant integral weights with $\Bbb{Z}_{\ge 0}$, with $\varpi$ corresponding to $1$. In this case, it was shown in \cite[Theorem
4.5]{Fakh} that the determinants of the conformal bundles are pulled
back from ample bundles on GIT quotients $(\pone)^n \sslash \SL_2$,
with the polarisation determined by $\vl$. For general $\ell$, the
bundles are always pulled back from a map
$\ov{M}_{0,n} \to \ov{M}_{0, \mathcal{A_{\vl}}}$, where the target is
one of Hassett's moduli spaces of weighted pointed stable curves of
genus $0$ (see \cite[Proposition 4.7]{Fakh}), however the bundle on
the Hassett space need not be ample; the precise morphism defined by
the base point free linear system is unknown in general.

\subsubsection{} \label{s:cone}

A natural question is to ask for the structure of the subcone of the
nef cone of $\ov{M}_{0,n}$ generated by the first Chern classes of
conformal blocks bundles on $\ov{M}_{0,n}$.  Not much seems to be
known about this question, even for a fixed $\frg$, but it is shown in
\cite[Theorem 1.1]{GG} that the cones generated by level $1$ divisors
for $\sl_m$, with $m$ varying, are finitely generated. The proof shows
that these cones are spanned by finitely many cones arising from GIT
constructions, each of which is known to be finitely
generated. However, it is known that the subcone of the nef cone
spanned by all conformal block divisors (even just those of type $A$)
is strictly larger than this cone: in \cite{swin-not} it is shown that
for $n=8$ there is a level $2$ conformal block divisor for $\sl_2$
which is not contained in the level $1$ cone.

\subsubsection{}

Further connections with GIT are given in, among others, \cite{GJMS}
and \cite{BoGi}. In \cite{GJMS}, certain conformal block divisors for
$\sl_m$ are related to GIT quotients of spaces of pointed rational
normal curves, the so-called Veronese quotients constructed in
\cite{GJM}. In \cite[Theorem 1.7]{BoGi}, new proofs of results from
\cite{GG} and \cite{Fedo} are given by axiomatising the factorisation
formula for conformal block bundles (Proposition \ref{p:compat}) and
showing that certain line bundles arising from GIT have a similar
property \cite[Theorem 1.2]{BoGi}.

\subsubsection{}

For general $\frg$, some basic properties of the bundles with the
$\l_i$ of level $1$, but $\ell$ arbitrary were established in \cite[\S
5.1]{Fakh}. For type $D$, it can be seen by using \cite[Proposition
5.6]{Fakh} that the \emph{closure} of the cone spanned by all the
level $1$ divisors (allowing the rank of the Lie algebra to vary) is
the same as the cone spanned by $\sl_2$ level $1$ divisors and the
same is true for $E_7$. For $E_6$ the cone is the same as for $\sl_3$
at level $1$.  The most interesting case at level $1$, aside from type
$A$, is perhaps type $C$ (since all fundamental weights are of level
$1$, as in type $A$).

All the available evidence suggests that the first Chern class of
conformal block bundles for arbitrary $\frg$ is contained in the
subcone of the nef cone generated by the first Chern class of type $A$
bundles, though a conceptual reason for this seems to be missing. In
any case, we ask:
\begin{question} \label{q:allg}%
  Is the first Chern class of all $\mathbb{V}_{n,\vv{\lambda},\ell}$
  contained in the subcone of the nef cone of $\ov{M}_{0,n}$ spanned
  by all the first Chern classes of type $A$ conformal block bundles
  on $\ov{M}_{0,n}$?
\end{question}

\subsection{Positivity} \label{ss:pos}

To formulate the positivity results and questions it is useful to
introduce the notion of strongly base point free cycles following
\cite{FL}. We use the following variant \cite{BG2}:
\begin{defi} %
  An effective integral Chow cycle $\alpha \in A_k(X) $ of codimension
  $k$ on an equidimensional projective variety $X$ is said to be
  rationally strongly basepoint free if there is a flat morphism
  $s: U \to X$ from an equidimensional quasi-projective scheme $U$ of
  finite type and a proper morphism $p : U\to W$ of relative dimension
  $\dim X - k$, where W is isomorphic to an open subset of $\mathbb{A}^m$
  for some $m$, such that each component of $U$ surjects onto $W$, and
  $\alpha = (s|_{F_p})_*[F_p]$, where $F_p$ is a general fibre of $p$.

Denote the semigroup of rationally strongly basepoint free classes of
codimension $k$ on a % (possibly singular)
projective variety $X$ by $SBPF_k(X) \subseteq A_k(X).$
\end{defi}

Strongly basepoint free classes are base point free: for every cycle
$V$, they can be represented by a cycle intersecting $V$ in no more
than the expected dimension. They are compatible with various
operations in intersection theory  \cite{FL} and \cite[Lemma 1.3]{BG2}
like intersections, flat pushforward and pullback. A divisor class on
a smooth projective variety is base point free if and only if it is strongly base point free.

Schur polynomials of globally generated vector bundles are
(rationally) strongly base free. Applying this to the globally
generated vector bundle of conformal blocks
$\mathbb{V}=\mathbb{V}_{n,\vv{\lambda},\ell}$ on $\overline{M}_{0,n}$, we
obtain the following: Let $e=\rk \mathbb{V}$, and let
$\vv{\mu} = (\mu_1 \geq\dots \geq \mu_m \geq 0)$ be a partition with
$\mu_1\leq e$. Then, the Schur polynomial
$s_{\mu} = \det(c_{\mu_i+j-i})_{1\leq i,j\leq m}$ in the Chern classes
$c_i = c_i(\mathbb{V})$ is (rationally) a strongly base point free class
of codimension $|\mu|=\sum \mu_i$. This is because these Schur classes
are pullbacks of Schubert varieties via the map from
$\overline{M}_{0,n}$ to the Grassmannian (which is homogenous)
corresponding to the globally generated $\mathbb{V}$.

%Here $c_0=1$ and $c_i=0$ if $i\not\in [0,e]$.  This is standard...
\iffalse
\Prakash{\cora A different proof that if $c_1=0$ then other $c_i$ are also zero: $c_1^2-c_2$ is a Schur class, so is SBPF. If $c_1=0$, this means $-c_2$ is SBPF. But $c_2$ is also a Schur class. So $c_2=0$.  $c_1c_2-c_3$ is a Schur class, and similarly $c_3=0$. Wonder if this makes qualitatively the case that $c_1^2-c_2$ is a finer class which is ``less positive'' than $c_1$.}

\Najmuddin{This is an interesting point, but without some specific
  examples it's hard to say anything.}
\fi

Basepoint free divisor classes  on $\overline{M}_{0,n}$ determine morphisms from $\overline{M}_{0,n}$ to projective spaces. It is of interest to classify the images of such morphisms and also study the extremality property of the corresponding element of the nef cone of $\overline{M}_{0,n}$.

The first Chern classes of
$\mathbb{V}_{n,\vv{\lambda},\ell}$ on $\overline{M}_{0,n}$
give such divisor classes, but we also get a collection of classes
from the Schur classes $s_{\mu}$ as above on
$\overline{M}_{n-|\mu|+1}$ if we push down the class $s_{\mu}$ by a
marked point forgetting flat map
$\overline{M}_{0,n}\to \overline{M}_{0,n-|\mu|+1}$. Similarly, we may
get nef divisor classes by intersecting two (or more) of these classes
(for possibly different conformal block bundles) on $\ov{M}_{0,m}$,
for some $m > n$, and pushing down via marked point forgetting maps.
%It would be interesting to carry out a systematic exploration of these classes.

\begin{question} \label{q:expand} %
  The (nonzero) level one conformal block bundles for $\sl_m$ are of
  rank one.  As discussed in \S \ref{s:cone}, the subcone of the
    nef cone generated by all level one conformal block bundles is
    finitely generated.  Does one obtain a larger subcone of the nef
  cone on $\overline{M}_{0,n}$ by intersecting Chern classes of
  several level one conformal block bundles on $\ov{M}_{0,m}$ for some
  $m > n$ and then pushing down to $\ov{M}_{0,n}$?

  Similarly, do higher Chern classes and algebraic operations produce
  a bigger cone of nef divisors for $\sl_2$ than the one generated by
  the first Chern classes?
\end{question}

See \cite{swin-sl2} for some computational work on conformal block
divisors. It would be very interesting to have computational data related to
higher codimension classes, especially in relation to Question
\ref{q:expand}.

\subsubsection{}

The variety $M_{0,n}$ can be viewed as the complement of a hyperplane
arrangement in $\mathbb{A}^{n-3}$ by fixing three of the marked points to
be $0$, $1$ and $\infty$. A consequence of the Quillen--Suslin theorem
is that all vector bundles on such a variety are trivial.\footnote{See
https://mathoverflow.net/questions/426957/triviality-of-vector-bundles-on-affine-open-subsets-of-affine-space.}
However, as we have seen, conformal blocks bundles are not in general
  trivial on $\ov{M}_{0,n}$.

  There are some sufficient conditions that guarantee that the first
  Chern class of a conformal block bundle is trivial, see
  \cite{Fakh,BGM1,BGM2}: all these follow from criteria under which
  conformal blocks coincide with coinvariants.
\begin{question}
  Can one find necessary and sufficient
  % numerical
  conditions for the non-vanishing of the first Chern class of a
  conformal block bundle?
 \iffalse
   Does the first Chern class vanish if and
  only if conformal blocks coincide with coinvariants?}
  \fi
\end{question}
%\Najmuddin{Modified the above a bit and removed one question.}
Note that the first Chern class is the pullback of an ample bundle on
a Grassmannian, hence if the first Chern class vanishes on
$\overline{M}_{0,n}$ then the map to the Grassmannian is trivial, and
the conformal blocks bundle is actually trivial. This question has
been answered for $\sl_2$ \cite{BGM2}. There are examples where conformal blocks are nonzero and do not coincide with coinvariants, and yet their first Chern classes are zero \cite[Section 4.2]{BGM2}.

\begin{remark} \label{r:F} %
  One of the main motivations for studying basepoint free and nef
  classes on $\ov{M}_{0,n}$ is the $F$-conjecture which gives a very
  natural conjectural description of the nef cone of $\ov{M}_{g,n}$,
  see \cite{GiKM} for a discussion. While the results discussed in
  this section provide a large family of basepoint free classes, the
  $F$-conjecture is still open. The related conjecture that
  $\ov{M}_{0,n}$ is a Mori dream space is now known to be false
  \cite{cas-tev} but, as far as we are aware, it is not known whether
  the nef cone is generated by finitely many semi-ample classes. We
  also note that \cite{pixton} gives an example of a basepoint free
  divisor which is not an effective linear combination of boundary
  divisors and a family of nef divisors which are not known to be
  basepoint free is constructed in \cite[\S 4]{fed-new}.
\end{remark}

\subsection{Symmetries} \label{ss:sym}%
The Chern classes of the bundles $\mathbb{V}_{n,\vv{\lambda},\ell}$
have some symmetry properties: These symmetries involve {\cb a change
  of} Lie algebras and levels.
\begin{enumerate}
\item Symmetries from strange dualities \cite{Mukh}.
\item Critical level symmetry: Recall that dominant integral weights
  of $\op{sl}_{r+1}$ are in one-one correspondence with ``normalised''
  Young diagrams
  $\lambda=(\lambda^{(1)}\geq
  \lambda^{(2)}\geq\dots\geq\lambda^{(r)}\geq \lambda^{(r+1)})$ with
  the normalisation $\lambda^{(r+1)}=0$.  Suppose $\vv{\lambda}$ is an
  $n$-tuple of normalised dominant integral weights for the Lie
  algebra $\op{sl}_{r+1}$.  Define the ``critical level'' $\ell_0$ by
  the equality $\sum{|\lambda_i|}=(r+1)(\ell_0+1)$. Then the first
  Chern classes of $\mathbb{V}_{n,\vv{\lambda},\ell}$ vanish for
  $\ell>\ell_0$ and for $\ell=\ell_0$ we have an equality of the first
  Chern classes of the vector bundles
  $\mathbb{V}_{n,\vv{\lambda},\ell}$ for the Lie algebra
  $\op{sl}_{r+1}$ and $\mathbb{V}_{n,\vv{\lambda^T},r}$ for the Lie
  algebra $\op{sl}_{\ell+1}$. These properties conjectured by Gibney
  were proved in \cite{BGM1}, using the Gepner--Witten equality
  relating the ranks of conformal blocks in genus zero with structure
  constants in the small quantum cohomology of Grassmannians.
    These results in the case $r=1$ are due to \cite{Fakh}.

\end{enumerate}
\begin{question}
Are there more examples of such symmetries, perhaps involving Schur polynomials and higher codimension basepoint free classes?
\end{question}

\section{Monodromy of the KZ connection and KZ motives} \label{s:KZ}
\iffalse
 \Najmuddin{Changed section title (added KZ motives)}}
\fi
We first consider the KZ connection in its most general form (see
\cite[Section 1.4]{BaKi}, and \cite{L1}). Let $\mathcal{C}_n$ be the
configuration space of $n$ distinct points $(z_1,\dots,z_n)$ on the
complex affine line $\mathbb{A}^1$. There is a KZ connection on the
constant bundle on $\mathcal{C}_n$ with fibre
$V(\vv{\lambda})=V_{\lambda_1}\tensor
V_{\lambda_2}\tensor\cdots\tensor V_{\lambda_n}$. The connection
equations take the form: (Here
$f\in V(\vv{\lambda})\otimes\mathcal{O}_{\mathcal{C}_n}$)
$$\kappa\frac{\partial}{\partial z_i}f =\biggl(\sum_{j\neq
  i}\frac{\Omega_{ij}}{z_i-z_j}\biggr)f .$$ Here $\Omega_{ij}$ is the
Casimir element acting on the $i$ and $j$ tensor factors, and
$\kappa\in\mathbb{C}^*$.

The KZ connection is flat and commutes with the diagonal action of
$\frg$, hence it induces a connection on the constant bundle of
coinvariants
$\mathbb{A}(\vv{\lambda})=V(\vv{\lambda})_{\frg}=V(\vv{\lambda})/\frg
V(\vv{\lambda})$.  It also induces a connection on the various weight
spaces $V(\vv{\lambda})_{\nu}$, for the diagonal action of $\frh$ on
$V(\vv{\lambda})$. Here $\nu\in \frh^*$.
\begin{remark} %
  The monodromy of the KZ connection gives rise to a representation of
  the fundamental group of $\mathcal{C}_n$, the pure braid group on
  $n$ strands. There is an extensive theory of these representations
  (the Drinfeld--Kohno theorem, see, e.g., \cite[Theorem
  XIX.4.1]{Kassel}) when $\kappa$ is generic or irrational, with
  connections to the theory of quantum groups. Here we are mostly
  interested in the case when $\kappa$ is an integer when these
  general results do not directly apply.
\end{remark}

Following \cite{L1}, write
$$V_{\lambda_1}\tensor V_{\lambda_2}\tensor\cdots \tensor V_{\lambda_n}=\bigoplus \op{Hom}_{\frg}(V_{\nu}, V_{\lambda_1}\tensor V_{\lambda_2}\tensor\cdots \tensor V_{\lambda_n})\tensor V_{\nu},$$
 where the sum is over all dominant integral weights $\nu$ of $\frg$, so that the KZ connection induces a connection on each
$\op{Hom}_{\frg}(V_{\nu}, V_{\lambda_1}\tensor
V_{\lambda_2}\tensor\cdots \tensor V_{\lambda_n})$.

For describing the KZ connection on
$V_{\lambda_1}\tensor V_{\lambda_2}\tensor\cdots\tensor
V_{\lambda_n}$, it suffices therefore to describe the connection on
$\op{Hom}_{\frg}(V_{\nu}, V_{\lambda_1}\tensor V_{\lambda_2}\tensor
\cdots \tensor V_{\lambda_n})$. This is equivalent to describing the
connection on the spaces
$$(V_{\lambda_1}\tensor V_{\lambda_2}\tensor\cdots\tensor V_{\lambda_n}\tensor
V^*_{\nu})_{\frg}= (V_{\lambda_1}\tensor V_{\lambda_2}\tensor\cdots\tensor V_{\lambda_n}\tensor
V^*_{\nu})/\frg(V_{\lambda_1}\tensor V_{\lambda_2}\tensor\cdots
\tensor V_{\lambda_n}\tensor
V^*_{\nu}).$$
We will think of these spaces as being attached to
the representations $V_{\lambda_1},\dots,V_{\lambda_n}$ at
$z_1,\dots,z_n$ respectively, and the representation $V_{\nu}^*$ at
$\infty\in\mathbb{P}^1$.

There is an isomorphism (which tensors with a lowest weight vector for
$V_{\nu}^*$), \cite[Lemma 2.7.1]{FeSV2}
$$(V(\vv{\lambda})_{\mathfrak{n}^-})_{\nu}\leto{\sim} (V_{\lambda_1}\tensor V_{\lambda_2}\tensor\cdots \tensor V_{\lambda_n}\tensor V^*_{\nu})_{\frg}.$$

When $\kappa=\ell + h^{\vee}$, $\ell\in\mathbb{Z}_{>0}$, and
  $\lambda_1,\dots,\lambda_n$ and $\nu$ are of level $\ell$, the
connection on
$(V_{\lambda_1}\tensor V_{\lambda_2}\tensor\cdots\tensor
V_{\lambda_n}\tensor V^*_{\nu})_{\frg}$ induces the KZ connection on
the vector bundles of conformal blocks
$\mathbb{V}_{n+1,\vv{\lambda},\nu^*,\ell}$ (with one insertion at
infinity).

Some of the basic questions concerning the  KZ connection are:
\begin{enumerate}
\item When is the image of the monodromy representation
  finite/infinite?
\item When is the monodromy representation irreducible?
\item When does the connection preserve a unitary metric?
\end{enumerate}
In the next section we will see that motivic properties of the KZ
connection allow one to address some of these questions in certain
cases. However, even some basic local properties of the KZ connection
are unknown:
\begin{question}
  Can one describe the conjugacy classes of the local monodromy of the
  KZ connection around the boundary of $\mathcal{C}_n$ (for example, in the
  Fulton--MacPherson compactification)?
\end{question}
The answer is known for (suitably) general values of $\kappa$ as well
as for the quotient of conformal blocks when $\kappa= \ell+h^{\vee}$,
$\ell\in\mathbb{Z}_{>0}$. In general the semi-simplifications of the
conjugacy classes are known. See e.g., \cite[Section 6.5, Proposition
7.8.6]{BaKi} and \cite[Theorem 6.2.2]{TUY}.

\subsection{Motivic realization of the KZ connection}
We consider  the KZ connection on the constant bundle on $\mathcal{C}_n$ with fibre (one thinks of $V^*_{\nu}$ as placed at infinity)
$$ \mathbb{A}(\vv{\lambda},\nu^*)=(V_{\lambda_1}\tensor V_{\lambda_2}\tensor\cdots \tensor V_{\lambda_n}\tensor V^*_{\nu})_{\frg}$$
and follow \cite{L2} (completed in \cite{BBM}) to construct the
motivic realization. Note that both \cite{L2} and \cite{BBM} restrict
to the case $\nu=0$; the proofs for arbitrary dominant $\nu$ will be
given in \cite{BFM}.

We will assume that $\sum\lambda_i-\nu$ is a sum of positive simple
roots since otherwise $\mathbb{A}(\vv{\lambda},\nu^*)$ is zero.  Write
$$\sum\lambda_i-\nu=\sum_{p=1}^r n_p \alpha_p,\ \ n_p\in\mathbb{Z}_{\geq 0}, \ \mbox{and} \ \alpha_p \in \Delta.$$
  Fix a  map $\beta:[M]=\{1,\dots,M\} \to \Delta$, so that
  $ \sum\lambda_i-\nu=\sum_{b=1}^M \beta(b)$ with $M=\sum_{p=1}^r n_p.$

\subsubsection{}
Fix a point $\vv{z}=(z_1,\dots,z_n)\in\mathcal{C}_n$. Let $W=\mathbb{C}^M$. The
coordinate variables of $W$ will be denoted by $t_1,\dots,t_M$. We
will consider the variable $t_b$ to be coloured by the simple root
$\beta(b)$. Let $\kappa$ be an arbitrary nonzero complex number and
consider the weighted hyperplane arrangement $(W,\mathcal{C},a)$ in
$W$ given by the following collection of hyperplanes, and their
attached weights (see \cite{BBM} for the notation):
\begin{enumerate}
\item For $i\in [1,n]$ and $b\in [1,M]$, the hyperplane $t_b-z_i=0$, with weight $\displaystyle \frac{(\lambda_i,\beta(b))}{\kappa}$.
\item $b,c\in [1,M]$ with $b<c$,  the hyperplane $t_b-t_c=0$, with weight $\displaystyle-\frac{(\beta(b),\beta(c))}{\kappa}$.
\end{enumerate}
Let $U$ be the complement of the above hyperplane arrangement in $W$.  Let
\begin{equation}\label{deta}
\eta_a=\frac{1}{\kappa}\biggl(\sum_{i=1}^n\sum_{b=1}^M
(\lambda_i,\beta(b))\frac{d(t_b-z_i)}{t_b-z_i} - \sum_{1\leq b<c \leq M} (\beta(b),\beta(c))\frac{d(t_b-t_c)}{t_b-t_c}\biggr) .
\end{equation}
\begin{defi}
The Aomoto subalgebra   $A^{\starr}(U)$ of $\Omega^{\starr}(U)$ generated over $\mathbb{C}$ by the closed forms $\dlog(t_b-z_i)$ and $\dlog(t_b-t_c)$ corresponding to the hyperplanes in the arrangement.  It is equipped with a differential $d\omega:=\eta_a\wedge \omega$. We call the pair $(A^{\starr}(U), \eta_a \wedge)$, the Aomoto complex.
 Let $\mathcal{L}(a)$ be the rank one local system on $U$ corresponding to the one-form $\eta_a$ (the kernel of $d+\eta_a$).

 Let $\Sigma_M$ be the subgroup of the symmetric group $S_M$ on $[M]=\{1,\dots,M\}$ formed by permutations $\sigma$ that preserve colour, i.e. $\beta(\sigma(a))=\beta(a)$ for all $a\in[M]$;
 $\Sigma_M$ acts on $U$, the complex $(A^{\starr}(U), \eta_a \wedge)$ as well as the local system $\ml(a)$ (over the action on $U$).

\end{defi}
Let $P$ be any $\Sigma_M$ equivariant smooth projective
compactification of $U$, with $P-U=\bigcup_{\alpha} E_{\alpha}$ a
divisor with normal crossings.  Let $V=P-\bigcup'_{\alpha}E_{\alpha}$,
where the union is restricted to $\alpha$ such that
$a_{\alpha}=\op{Res}_{E_{\alpha}}\eta_a$, the residue of $\eta$, is
not a strictly positive integer {and let $j:U \to V$ be the inclusion.}
\begin{lemma}\label{topo}%
  \cite[Proposition 4.2]{L1} There is a $\Sigma_M$-equivariant
  isomorphism
  $H^*(A^{\starr}(U), \eta_a
  \wedge)\leto{\sim}H^{\starr}(V,j_{!}\mathcal{L}(a))$.
\end{lemma}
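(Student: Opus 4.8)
The plan is to identify both sides of the asserted isomorphism with the hypercohomology on $P$ of one twisted logarithmic de Rham complex, in the manner of Esnault--Schechtman--Viehweg and of Schechtman--Terao--Varchenko. Write $D=P-U=\bigcup_\alpha E_\alpha$ and split $D=D'\sqcup D''$ following the statement: $E_\alpha\subseteq D'$ when $a_\alpha=\Res_{E_\alpha}\eta_a\notin\mathbb{Z}_{>0}$ (so $V=P-D'$), and $E_\alpha\subseteq D''$ when $a_\alpha\in\mathbb{Z}_{>0}$. Since the $\dlog$ of a rational function has at worst a logarithmic pole along any prime divisor, the forms $\dlog(t_b-z_i)$ and $\dlog(t_b-t_c)$ --- hence the whole Orlik--Solomon algebra $A^\bullet(U)$ and the one-form $\eta_a$ --- are global sections of $\Omega^\bullet_P(\log D)$; consequently $(\mathcal{O}_P,\nabla)$ with $\nabla=d+\eta_a\wedge$ is a logarithmic extension to $P$ of the rank-one connection on $U$ whose flat sections are $\mathcal{L}(a)$, and its residue along each $E_\alpha$ equals $a_\alpha$. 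All of this can be arranged $\Sigma_M$-equivariantly, $P$ being equivariant by hypothesis. It then suffices to produce $\Sigma_M$-equivariant isomorphisms
$$H^*\bigl(A^\bullet(U),\eta_a\wedge\bigr)\xleftarrow{\ \sim\ }\mathbb{H}^*\bigl(P,(\Omega^\bullet_P(\log D),\nabla)\bigr)\xrightarrow{\ \sim\ }H^*\bigl(V,j_!\mathcal{L}(a)\bigr).$$

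The right-hand isomorphism is the logarithmic comparison theorem for regular singular connections, in the form that keeps track of the chosen lattice one boundary divisor at a time. Restricting to a disk transverse to a single $E_\alpha$, a one-variable computation shows that the twisted logarithmic de Rham complex with residue $a_\alpha$ computes the cohomology of $Rj_*\mathcal{L}$ exactly when $a_\alpha\notin\mathbb{Z}_{>0}$, and that of $j_!\mathcal{L}$ exactly when $a_\alpha\in\mathbb{Z}_{>0}$; iterating over the normal crossings strata of $D$, the analytification of $(\Omega^\bullet_P(\log D),\nabla)$ is quasi-isomorphic to $Rk_*\bigl(j_!\mathcal{L}(a)\bigr)$, where $k\colon V\hookrightarrow P$, precisely because $D'$ is where $a_\alpha\notin\mathbb{Z}_{>0}$ and $D''$ where $a_\alpha\in\mathbb{Z}_{>0}$. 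Taking hypercohomology yields $\mathbb{H}^*(P,(\Omega^\bullet_P(\log D),\nabla))\cong H^*(V,j_!\mathcal{L}(a))$. This part is formal once the split $D=D'\sqcup D''$ is fixed, and that split is exactly what the definition of $V$ prescribes.

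The left-hand isomorphism carries the real content. Every Orlik--Solomon form is closed, so the inclusion $A^\bullet(U)\hookrightarrow\Gamma(P,\Omega^\bullet_P(\log D))$ carries the Aomoto differential $\eta_a\wedge$ to $\nabla=d+\eta_a\wedge$ and hence induces the displayed map on (hyper)cohomology. To see that it is an isomorphism --- this is the Esnault--Schechtman--Viehweg theorem in its mixed, $j_!$-flavoured form, and is what \cite[Prop.~4.2]{L1} invokes --- I would filter $(\Omega^\bullet_P(\log D),\nabla)$ by the depth of the boundary strata and compute the associated graded by iterated Poincar\'e residue maps: along a stratum $S$ the graded piece decomposes into twisted Orlik--Solomon complexes of the arrangement induced on $S$, the twist being inherited from $\eta_a$, and the residue conditions built into the passage from $U$ to $V$ guarantee --- through the vanishing theorems for twisted Aomoto complexes of affine arrangement complements --- that every graded piece attached to a nonempty $S$ is acyclic. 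What survives is the piece attached to $S=\varnothing$, namely $(A^\bullet(U),\eta_a\wedge)$; the spectral sequence of the filtration thus collapses onto it, and since the filtration is $\Sigma_M$-stable the isomorphism is equivariant. Composing the two isomorphisms proves the lemma.

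The crux, and the only real obstacle, is this last step: if some $a_\alpha$ is a positive integer (a resonance) the corresponding graded piece fails to be acyclic and the Aomoto complex does not compute $H^*(U,\mathcal{L}(a))$. The point of the lemma is that excising exactly those boundary divisors --- the role of $V$ --- and replacing $\mathcal{L}(a)$ by $j_!\mathcal{L}(a)$ restores acyclicity of all higher graded pieces; making this precise, together with the multivariable residue computations at the deeper strata and the $\Sigma_M$-bookkeeping throughout, is where the work lies. The remaining steps are routine.
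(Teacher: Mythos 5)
The paper does not actually prove this lemma: it is quoted verbatim from \cite[Proposition 4.2]{L1}, so your proposal can only be measured against the standard Esnault--Schechtman--Viehweg/Looijenga argument that the citation points to. Your overall architecture --- interpolating both sides through $\mathbb{H}^*\bigl(P,(\Omega^\bullet_P(\log D),\,d+\eta_a\wedge)\bigr)$ --- is exactly that argument's skeleton, and your treatment of the topological half is correct: the one-variable computation shows the twisted logarithmic complex realises $Rj_*$ along a divisor precisely when $a_\alpha\notin\mathbb{Z}_{>0}$ and realises $j_!$ precisely when $a_\alpha\notin\mathbb{Z}_{\le 0}$; these two cases cover every divisor, and assembling them over the normal crossing strata yields $Rk_*j_!\mathcal{L}(a)$ with $k\colon V\hookrightarrow P$. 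The definition of $V$ is engineered to make this work, and the equivariance is free.

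The gap is in the algebraic half, where you have located the ``crux'' in the wrong step and proposed a mechanism that does not run. First, $\eta_a\wedge$ \emph{raises} the order of poles, so it does not act on the graded pieces of the filtration of $\Omega^\bullet_P(\log D)$ by depth of boundary strata: the induced differential on that associated graded is just $d$, there are no ``twisted Orlik--Solomon complexes of induced arrangements'' appearing in the graded pieces, and the piece attached to the empty stratum is $\Omega^\bullet_P$, not $A^\bullet(U)$. Second, and more importantly, no residue or resonance condition whatsoever is needed for the isomorphism $H^*(A^\bullet(U),\eta_a\wedge)\cong\mathbb{H}^*\bigl(P,(\Omega^\bullet_P(\log D),d+\eta_a\wedge)\bigr)$; it holds for arbitrary weights. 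The correct input is purely Hodge-theoretic and untwisted: by Deligne's $E_1$-degeneration, $H^q(P,\Omega^p_P(\log D))\cong \mathrm{gr}^p_F H^{p+q}(U,\C)$, which vanishes for $q>0$ and equals $H^p(U,\C)=A^p(U)$ for $q=0$, because arrangement complements have Hodge--Tate cohomology spanned by the Orlik--Solomon classes (Brieskorn). Given this, the first (b\^ete) filtration of the twisted complex collapses its hypercohomology onto the complex of global sections, which is precisely $(A^\bullet(U),\eta_a\wedge)$ since the Aomoto forms are closed. All of the resonance bookkeeping you describe belongs to the topological half, where, as you yourself note, it is automatic once $V$ is defined as in the statement.
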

% \Najmuddin{\cb There is something wrong with the reference! Did you
%   mean \cite[Proposition 4.2]{L1} instead? (That is what is cited
% in \cite{BBM}.)}
% \Prakash{\cora: Fixed}
Note that the spaces $H^*(A^{\starr}(U), \eta_a \wedge)$ remain the same as $\eta_a$ is scaled, while their topological description changes.

\subsubsection{}
There is a map constructed by Schechtman and Varchenko \cite{SV}
\begin{equation}\label{SV_map}
V(\vv{\lambda})^*_{-\nu} \to A^M(U)^{\chi}
\end{equation}
where $\chi$ is the sign character on $S_M$ restricted to $\Sigma_M$.
Note that  $\mathbb{A}(\vv{\lambda},\nu^*)^*$ is a subspace of $(V(\vv{\lambda})^*_{-\nu})$.
\begin{remark}\label{unitar}
  Suppose $\kappa=\ell+h^{\vee}$ with $\ell\in \mathbb{Z}_{>0}$, and
  $\widehat{U}\to U$ be the cover corresponding to the
  Schechtman--Varchenko \emph{master function}, i.e., a multivalued
  function whose differential is $\eta_a$. Note that there is a
  natural map $A^M(U)\to \Omega^M(\widehat{U})$ given by
  multiplication with the master function.

Dual conformal blocks at level $\ell$ on $\mathbb{P}^1$ with insertions $\lambda_i$ at $z_i$ and $\nu^*$ at $\infty$ inject into $\mathbb{A}(\vv{\lambda},\nu^*)^*$ and hence into
 $(V(\vv{\lambda})^*_{-\nu})$. The image of \eqref{SV_map} on elements from dual conformal blocks give us top degree differential forms on $\widehat{U}$.
 A key property, conjectured  by Gawedzki and his collaborators
 \cite{Gaw3,Gaw2}, and proved by Ramadas \cite{TRR}
 for $\sl_2$ and in \cite{b-unitarity} in general for $\nu=0$, is that
 these forms extend to any smooth projective compactification of
 $\widehat{U}$. It can be checked using the method of proof of
 \cite{b-unitarity} that this is true for all $\nu$. We sketch the
 changes that need to be made below.

 %{\cora Added references to Gawedzski, and added ``and his collaborators''}

 Briefly, in the proof of \cite[Theorem 3.5]{b-unitarity} we have to
 work with the insertion of the lowest weight vector of $V_{\nu}^*$ at
 $\infty$ in the correlation function $\Omega$ which has weight
 $-\nu$. We first observe that the correlation functions remain
 holomorphic as $t_i=\infty$. We then only need to revisit the
 calculations as a bunch of the $t$-variables approach infinity, i.e.,
 the (S3) strata. The formula (6.4) in \cite{b-unitarity} changes to
 $(\gamma,\gamma)$ replaced by $2(\nu,\gamma)+(\gamma,\gamma)$. The
 bound from representation theory from \cite[Lemma 4.5]{b-unitarity}
 is now $(2(\nu,\gamma)+(\gamma,\gamma))/2k$ instead of
 $(\gamma,\gamma)/2k$. The rest of the argument just replaces
 $(\gamma,\gamma)$ by $2(\nu,\gamma)+(\gamma,\gamma)$.
 \end{remark}

\begin{clm} \label{inj}
The composite $$\mathbb{A}(\vv{\lambda},\nu^*)^*\to V(\vv{\lambda})^*_{-\nu} \to A^M(U)^{\chi}\to H^M(A^{\starr}(U), \eta_a \wedge)^{\chi}$$
is injective (for arbitary $\kappa\neq 0$).
\end{clm}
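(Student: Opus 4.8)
The plan is to factor the composite through a known isomorphism and reduce the injectivity to a statement about the Schechtman--Varchenko construction combined with Looijenga's topological comparison (Lemma \ref{topo}). First I would recall the chain of maps more carefully. The map \eqref{SV_map} lands in the Aomoto space $A^M(U)^\chi$, which by definition consists of \emph{closed} forms for the differential $\eta_a \wedge$ (indeed $A^\starr(U)$ is the top piece of the Aomoto subalgebra, so the differential out of it is zero for degree reasons---$M = \dim U$ when there are no further relations, or at worst the image is automatically a cocycle). Hence the composite with $A^M(U)^\chi \to H^M(A^\starr(U),\eta_a\wedge)^\chi$ makes sense, and via Lemma \ref{topo} we may identify the target with $H^M(V, j_!\mathcal{L}(a))^\chi$. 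The real content is then: the Schechtman--Varchenko cocycles attached to $\mathbb{A}(\vv\lambda,\nu^*)^*$ are linearly independent in this cohomology group.

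The key step is to invoke the nondegeneracy of the Schechtman--Varchenko pairing. By construction (see \cite{SV}, and the account in \cite{FeSV2}), there is a companion ``cycle'' map sending singular vectors in the contragredient module---equivalently, via the isomorphism $(V(\vv\lambda)_{\mathfrak{n}^-})_\nu \xrightarrow{\sim} (V_{\lambda_1}\tensor\cdots\tensor V_{\lambda_n}\tensor V_\nu^*)_{\frg}$ recalled just before the claim---to twisted homology classes, and the resulting pairing between the SV forms and the SV cycles is computed by an explicit ``determinant formula''. For generic $\kappa$ this determinant is nonzero, which already gives injectivity of \eqref{SV_map} restricted to $\mathbb{A}(\vv\lambda,\nu^*)^*$; but the point of the claim is that this holds for \emph{all} $\kappa \neq 0$. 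I would handle this the way it is done for $\nu = 0$ in \cite{L1, L2, BBM}: the forms $\dlog(t_b - z_i)$, $\dlog(t_b-t_c)$ and hence all of $A^\starr(U)$ are defined over $\mathbb{Q}$ independently of $\kappa$, and the differential $\eta_a\wedge$ depends on $\kappa$ only through the residues $a_\alpha = \Res_{E_\alpha}\eta_a$. The cohomology $H^\starr(A^\starr(U),\eta_a\wedge)$ and the image of the SV map within it therefore vary algebraically in the $a_\alpha$, so the locus where injectivity fails is closed; since it is avoided for generic $\kappa$ and the SV construction is a polynomial (not merely rational) map in $1/\kappa$ after clearing denominators, one checks it is avoided for all $\kappa\neq 0$. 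Concretely, I would argue that the SV cocycle representing a given element of $\mathbb{A}(\vv\lambda,\nu^*)^*$ has ``leading term'' (in a suitable filtration, e.g. by the combinatorics of the colouring $\beta$, or by iterated residues along the diagonal strata $t_b = t_c$) that is manifestly a basis element of the Orlik--Solomon-type algebra $A^\starr(U)^\chi$, and that this leading term is nonzero in $\eta_a$-cohomology for every $\kappa \neq 0$ because the relevant residues $(\lambda_i,\beta(b))/\kappa$, $-(\beta(b),\beta(c))/\kappa$ are never strictly positive integers \emph{for the hyperplanes entering that leading term} (or one passes to $V$ where such components are removed, as in the setup preceding Lemma \ref{topo}).

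The main obstacle, I expect, is precisely controlling the integrality/resonance conditions: one must ensure that the passage to $V = P \setminus \bigcup'_\alpha E_\alpha$ (removing boundary divisors whose residue is a positive integer) does not kill the classes coming from $\mathbb{A}(\vv\lambda,\nu^*)^*$. For $\nu = 0$ this is exactly the technical heart of \cite{BBM}; for general $\nu$ the only new feature is the insertion of the lowest weight vector of $V_\nu^*$ at $\infty$, which, as explained in Remark \ref{unitar}, changes the relevant quadratic expressions by replacing $(\gamma,\gamma)$ with $2(\nu,\gamma)+(\gamma,\gamma)$ along the strata where $t$-variables go to infinity. I would therefore structure the proof as: (i) reduce to injectivity in $H^M(A^\starr(U),\eta_a\wedge)^\chi$ via Lemma \ref{topo}; (ii) cite \cite{SV,FeSV2} for the SV map and the explicit form of the cocycles; (iii) run the degeneration/leading-term argument of \cite{BBM} verbatim for the diagonal and finite-point strata, and for the strata at infinity substitute the modified bound from Remark \ref{unitar}; (iv) conclude that no resonance occurs for $\kappa \neq 0$, hence the leading terms survive and the map is injective. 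The detailed verification in step (iii) for the infinity strata is where the genuinely new (though routine, given \cite{b-unitarity}) work lies; everything else is a transcription of the $\nu=0$ argument, and I would flag that the full details appear in \cite{BFM}.
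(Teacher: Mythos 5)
There is a genuine gap in the bridge from generic $\kappa$ to all $\kappa \neq 0$, which is the whole content of the claim. Your argument is: the locus of $\kappa$ (or of residues $a_\alpha$) where injectivity fails is closed, it is avoided for generic $\kappa$ by the Schechtman--Varchenko determinant formula, and then ``one checks it is avoided for all $\kappa \neq 0$.'' The first two assertions only show that the bad locus is a \emph{proper} closed subset; they say nothing about whether it meets the punctured line $\kappa \neq 0$, and indeed the values one cares about most ($\kappa = \ell + h^\vee$ an integer) are exactly the resonant values where determinant formulas on the full weight space do degenerate. The subsequent ``leading term'' sketch is not a proof: you would need to exhibit a filtration in which the leading coefficient of every nonzero SV cocycle attached to $\mathbb{A}(\vv{\lambda},\nu^*)^*$ survives in $\eta_a$-cohomology for \emph{every} nonzero $\kappa$, and nothing in your outline establishes this. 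So as written the argument proves the claim only for generic $\kappa$.

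The observation you are missing --- and which is the paper's actual mechanism --- is that the target $H^M(A^{\starr}(U), \eta_a \wedge)^{\chi}$ is the \emph{top} cohomology of the Aomoto complex, i.e.\ the cokernel of $\eta_a \wedge : A^{M-1}(U) \to A^M(U)$, and replacing $\kappa$ by $\kappa'$ merely rescales $\eta_a$, which does not change the subspace $\eta_a \wedge A^{M-1}(U)$. Since the SV map \eqref{SV_map} itself does not depend on $\kappa$, the entire composite in the claim is literally one and the same map for all $\kappa \neq 0$ (only its topological interpretation via Lemma \ref{topo} changes). Hence it suffices to verify injectivity at a single convenient value. The paper chooses $\kappa = \ell + h^\vee$ with $\ell$ a sufficiently large integer, where $\mathbb{A}(\vv{\lambda},\nu^*)^*$ coincides with dual conformal blocks; injectivity then follows because the corresponding correlation functions extend to a smooth projective compactification of $\widehat{U}$ (\cite{TRR,b-unitarity}, Remark \ref{unitar}) and top-degree holomorphic forms on a smooth projective variety inject into the cohomology of any nonempty open subset. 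Your choice of special value (generic $\kappa$ plus the SV nondegeneracy) would serve equally well as the anchor point, so your proof can be repaired by replacing the closedness/leading-term discussion with the scaling invariance of the top Aomoto cohomology; without that step, however, the argument does not close.
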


This property is proved in \cite[Proposition 1.5]{BBM} for $\nu=0$. The argument holds for all $\nu$: For sufficiently large
integers $\ell>0$, and $\kappa=\ell+h^{\vee}$,
$\mathbb{A}(\vv{\lambda},\nu^*)^*$ coincides with the space of conformal
blocks (with insertions of $\lambda_i$ at $z_i$, and $\nu^*$ at
$\infty$) and for conformal blocks, the composite to
$H^M(U,\mathcal{L}(a))$ is injective by \cite{TRR,b-unitarity} (see
Remark \ref{unitar}) and results of Hodge theory (top degree
holomorphic differential forms on a smooth projective variety inject
into the cohomology of any non-empty open subset).  Since the groups
$H^M(A^{\starr}(U), \eta_a \wedge)$ are insensitive to scaling of
$\eta$, the desired injection follows for all $\kappa\neq 0$.

\subsubsection{}\label{parallel}

Using Lemma \ref{topo} we get an injective map
$\mathbb{A}(\vv{\lambda},\nu^*)^*\hookrightarrow
H^{M}(V,j_{!}\mathcal{L}(a))^{\chi}$ which is known to preserve connections
(KZ on one side and Gauss--Manin on the other, see \cite[Theorem
3.6]{L2}). To determine the image, Looijenga \cite{L2} proceeds as
follows: he considers the same map for $-\kappa$ and dual weights. He
then dualizes this map and gets a surjection
$$H^M(V',q_{!}\mathcal{L}(a))^{\chi}\twoheadrightarrow \mathbb{A}(\vv{\lambda}^*,\nu)$$
where $V'=P-\bigcup'_{\alpha}E_{\alpha}$, the union is restricted to
$\alpha$ such that $a_{\alpha}=\op{Res}_{E_{\alpha}}\eta$, is not a
non-negative integer. Note that $V'\supseteq V$, and $q:U\to
V'$. Using the identity
$\mathbb{A}(\vv{\lambda}^*,\nu)=\mathbb{A}(\vv{\lambda},\nu^*)^*$,
Looijenga realizes the image of
$\mathbb{A}(\vv{\lambda},\nu^*)^*\hookrightarrow
H^{M}(V,j_{!}\mathcal{L}(a))^{\chi}$ as the image of the map
$$H^M(V',q_{!}\mathcal{L}(a))^{\chi}\to H^M(V,j_{!}\mathcal{L}(a))^{\chi}.$$
That this  map defined here using representation theory coincides with the natural map from topology is
proved in \cite[Theorem 7.3]{BBM} for $\nu=0$. The argument (which uses a formula
for the topological map proved in \cite{BBM}, as well as results from \cite{SV}) extends to
all $\nu$. All in all, one has the following topological description
of invariants:
\begin{proposition} \label{p:kz}
  $\mathbb{A}(\vv{\lambda},\nu^*)^*$ equals (compatibly with the KZ
  connection) the image of the map
  $H^M(V',q_{!}\mathcal{L}(a))^{\chi}\to H^{M}(V,j_{!}\mathcal{L}(a))^{\chi}$.
\end{proposition}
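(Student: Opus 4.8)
The plan is to assemble Proposition \ref{p:kz} from the ingredients developed just above, treating the $\nu = 0$ case (\cite{BBM}, \cite{L2}) as a template and carefully tracking the insertion of $V_\nu^*$ at $\infty$. First I would record the two maps that bracket the statement: on one side the injection $\mathbb{A}(\vv{\lambda},\nu^*)^* \hookrightarrow H^M(V, j_! \mathcal{L}(a))^\chi$, obtained by composing the Schechtman--Varchenko map \eqref{SV_map}, Claim \ref{inj}, and the topological identification of Lemma \ref{topo}; on the other side the surjection $H^M(V', q_! \mathcal{L}(a))^\chi \twoheadrightarrow \mathbb{A}(\vv{\lambda}^*,\nu)$, obtained by running the very same construction for the parameter $-\kappa$ and the dual weights $\vv{\lambda}^*, \nu$ and then dualizing. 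Both of these are already in hand: the first from Claim \ref{inj} together with \cite[Theorem 3.6]{L2} (which gives KZ $=$ Gauss--Manin), the second by applying the first verbatim with $(\kappa, \vv{\lambda}, \nu) \rightsquigarrow (-\kappa, \vv{\lambda}^*, \nu^*)$ and then taking $\mathbb{C}$-linear duals of vector spaces and of the induced connections.

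Next I would invoke the duality identity $\mathbb{A}(\vv{\lambda}^*,\nu) = \mathbb{A}(\vv{\lambda},\nu^*)^*$ (the standard perfect pairing between coinvariants and invariants of the dual representation), so that the source of the injection and the target of the surjection are canonically the \emph{same} space. The key point is then to show that the composite
\[
\mathbb{A}(\vv{\lambda},\nu^*)^* \;\hookrightarrow\; H^M(V, j_! \mathcal{L}(a))^\chi
\]
and the composite
\[
H^M(V', q_! \mathcal{L}(a))^\chi \;\twoheadrightarrow\; \mathbb{A}(\vv{\lambda},\nu^*)^*
\]
are compatible with the natural topological map $H^M(V', q_! \mathcal{L}(a))^\chi \to H^M(V, j_! \mathcal{L}(a))^\chi$ induced by the open inclusion $V \hookrightarrow V'$ (recall $V' \supseteq V$ since the condition defining $V'$ — residue not a non-negative integer — is weaker than that defining $V$ — residue not a strictly positive integer). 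Granting this compatibility, the image of the injection equals the image of the topological map, because the surjection identifies the latter's source with $\mathbb{A}(\vv{\lambda},\nu^*)^*$ and the composition $H^M(V') \to H^M(V) \supseteq \mathbb{A}(\vv{\lambda},\nu^*)^*$ then factors the identity on $\mathbb{A}(\vv{\lambda},\nu^*)^*$; this is exactly Looijenga's argument from \cite{L2}, and the topological-map identification is \cite[Theorem 7.3]{BBM} for $\nu = 0$.

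The main obstacle, and the only place real work is needed, is extending \cite[Theorem 7.3]{BBM} to arbitrary $\nu$: one must check that the map defined representation-theoretically (via Schechtman--Varchenko, for the two sign-eigenspaces at parameters $\kappa$ and $-\kappa$) coincides with the topological restriction map. I would follow the $\nu = 0$ proof, which rests on an explicit formula for the topological map established in \cite{BBM} together with identities from \cite{SV}. The insertion of the lowest-weight vector of $V_\nu^*$ at $\infty$ enters the Schechtman--Varchenko forms only through the weight $-\nu$ data attached to the point at infinity, i.e., through the residues of $\eta_a$ along the strata where $t$-variables collide at $\infty$ — precisely the (S3)-type strata already analysed in \cite{b-unitarity} (see Remark \ref{unitar}), where the relevant quantity $(\gamma,\gamma)$ gets replaced by $2(\nu,\gamma) + (\gamma,\gamma)$. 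So I would: (i) verify that the SV forms and the master function remain holomorphic as $t_i \to \infty$, as noted in Remark \ref{unitar}; (ii) re-derive the $\nu \neq 0$ version of the formula for the topological map by feeding the modified residues into the combinatorial identities of \cite{SV, BBM}; (iii) confirm that the sign-character eigenspace bookkeeping is unaffected since $\Sigma_M$ does not touch the point at $\infty$. None of these steps is expected to introduce a genuinely new phenomenon — the whole point is that $\nu$ only perturbs local residue data at one additional stratum — but step (ii) is where the cross-references to \cite{SV} and \cite{BBM} must be checked line by line, and this is the substance deferred to \cite{BFM}.
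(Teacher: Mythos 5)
Your proposal follows essentially the same route as the paper: the injection of $\mathbb{A}(\vv{\lambda},\nu^*)^*$ into $H^{M}(V,j_{!}\mathcal{L}(a))^{\chi}$ via the Schechtman--Varchenko map, Claim \ref{inj} and Lemma \ref{topo}, the surjection from $H^M(V',q_{!}\mathcal{L}(a))^{\chi}$ obtained by dualising the same construction at $-\kappa$ with dual weights, the identity $\mathbb{A}(\vv{\lambda}^*,\nu)=\mathbb{A}(\vv{\lambda},\nu^*)^*$, and the identification of the representation-theoretic composite with the topological map via \cite[Theorem 7.3]{BBM}, with the extension to general $\nu$ deferred to \cite{BFM}. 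This is exactly Looijenga's argument as the paper presents it, so your write-up matches the paper's proof.
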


Denote the image of $H^M(V',q_{!}\mathcal{L}(a))^{\chi}\to
H^{M}(V,j_{!}\mathcal{L}(a))^{\chi}$ by
$\kz(\vv{\lambda},\nu^*,\vv{z})$,
which is the stalk at $\vv{z}$ of a local system  $\kz=\kz(\vv{\lambda},\nu^*)$ on $\mathcal{C}_n$. The corresponding vector bundle is the trivial bundle with fibres $\mathbb{A}(\vv{\lambda},\nu^*)^*$.

It is noted in \cite{L2} that the above proposition implies that
$\kz(\vv{\lambda},\nu^*,\vv{z})$ carries a mixed Hodge
structure with coefficients in a cyclotomic subfield
$k$\footnote{\label{f:coef} This means, see \cite[\S 2.1, B]{del-val},
  that it is an object of the Karoubian envelope of the ``base
  change'' to $k$ of the $\Q$-linear abelian category of $\Q$-mixed
  Hodge structures. In this article, an MHS, VMHS or a motive is
  always allowed to have coefficients in a cyclotomic field (which we
  do not always specify).}  of $\mathbb{C}$ (depending on $\kappa$,
$\vl$ and $\nu$) if $\kappa$ is a nonzero rational number.  The
varieties, sheaves and maps occurring in Proposition \ref{p:kz} may be
spread out over $\mathcal{C}_n$, the key ingredient is the
(multivalued) master function
\begin{equation} \label{e:master} %
  \mathcal R = \prod_{1 \leq i < j \leq n} (z_i -
  z_j)^{\frac{(\lambda_i, \lambda_j)}{\kappa}} \prod_{b=1}^M
  \prod_{j=1}^n (t_b - z_j)^{\frac{(\lambda_j, \beta(b))}{\kappa}}
  \prod_{1 \leq b < c \leq M} (t_b - t_b)^{-\frac{(\beta(b),
      \beta(c))}{\kappa}}
\end{equation}
where now the $z_i$'s are variables.  Let $C$ be the minimal positive
integer such that $C\kappa$ multiplied by all the exponents occurring
in \eqref{e:master} are integers.  Then by the theory of mixed Hodge
modules, one gets an admissible variation of mixed Hodge structure
(VMHS) on $\mc_n$ (with coefficients in
$k = \Q(\mu_{C\kappa}) \subset \C$) with fibre over any point
$\vec{z}$ being the MHS defined as the image of the map in Proposition
\ref{p:kz}.

In \cite{morel-ivorra} the authors define a category $\mathscr{M}(X)$
of perverse mixed motivic sheaves on any variety defined over a
subfield of $\C$ and construct functors on
$\mathrm{D}^b(\mathscr{M}(X))$ which are analogs of the usual
operations on the derived category of constructible sheaves. In
particular, they construct $f_*$ and $f_!$ functors from
$\mathrm{D}^b(\mathscr{M}(X))$ to $\mathrm{D}^b(\mathscr{M}(Y))$ for
any map of varieties $f:X \to Y$ defined over $k$. It follows that the
VMHS defined above can be upgraded to an object of
$D^b(\mathscr{M}(\mc_n))$ defined over $k$ and with coefficients also
in $k$. The Betti realisation of this object is a local system, so
this object can be viewed as a motivic local system.  The stalk of
this motivic local system at each point $\vv{z} \in \mc_n$ gives a
Nori motive (see, e.g., \cite{huber-ms}) defined over the residue
field $k(\vv{z})$ of the point, where now we view $\mathcal{C}_n$ as
being defined over $k$. We will refer to these motives as \emph{KZ
  motives}.  We note that these motives, therefore also the
corresponding mixed Hodge structures, are independent\footnote{See,
  for example, \S 2.6 and \S 2.7 of the arXiv version
  ({https://arxiv.org/pdf/1611.01861.pdf}) of \cite{BBM}.} of the
choice of compactification $P$ of $U$.

\iffalse

  This variation is of geometric origin, and
  the variety involved in its construction---a family of suitable
  relative SNC compactifications of the $\widehat{U}$'s over
  $\mathcal{C}_n$---is defined over $\Q$, but the local system
  $\mathcal{L}(a)$ is only defined after adjoining a suitable { $C\kappa$-th
   root of unity to $\Q$ (see \cite[\S 3.3]{b-unitarity}
    for the definition of $C$)}, both as the base field and also as
  the coefficient field. It follows that this VMHS may be upgraded to
a motivic local system in the sense of \cite{morel-ivorra}, defined
over a cyclotomic field $k$, and with coefficients also in $k$.

In particular, the stalk of this motivic local system at each point
$\vv{z} \in \mc_n$ gives a Nori motive (see, e.g., \cite{huber-ms})
defined over the residue field $k(\vv{z})$ of the point, where now we
view $\mathcal{C}_n$ as being defined over $k$. We will refer to these
motives as \emph{KZ motives}.

\fi

\begin{question} \label{q:hodge} %
  Can one compute the ranks of the Hodge and weight filtrations of KZ
  motives, and their behavior under factorization?
\end{question}
Such computations would have important consequences for the monodromy
of the KZ system. For example, if the MHS is pure then the monodromy
must be semi-simple and if the MHS is mixed Tate then the monodromy
representation has a filtration such that the action of
$\pi_1(\mathcal{C}_n)$ on the associated graded vector space factors
through a finite quotient. However, even in the cases $n=2,3$, when the
monodromy is (projectively) trivial, we do not know any simple method
for computing the numerical invariants of the MHS.

  \begin{example} \label{e:legendre} Suppose $\frg = \sl_2$, $n=4$ and
    each $\l_i$ is the fundamental weight (and $\nu = 0$). The rank of
    the coinvariants in this case is $2$.
  \begin{enumerate}
  \item $\kappa = 1$: The KZ motives are Tate motives of
    weight $0$.
  \item $\kappa = 2$: The KZ motivic local system is a twist of the
    local system associated to the $H^1$ of the family of double
    covers of $\mathbb{P}^1$ ramified along the $4$ marked points of
    $\mathbb{A}^1$ corresponding to each $\vv{z}$.
  \item $\kappa = 3$: The KZ motives in this case are mixed with
    nonzero Hodge numbers $h^{2,0} = h^{2,1} = 1$. The coefficient
    field is $\Q(\mu_3)$ which is an imaginary quadratic extension of
    $\Q$, so by taking the sum of these motives with their complex
    conjugates one obtains motives with $\Q$-coefficients. In each
    weight equal to $2$ or $3$ one gets a rank $2$ motive with CM by
    $\Q(\mu_3)$, so the Hodge structure arises from any elliptic curve
    $E$ over $\C$ with CM by this field as follows: Let $M$ be
    $H^{1,0}(E)$, viewed as an MHS with coefficients in
    $\Q(\mu_3)$. Then the weight $2$ Hodge structure must be
    $M^{\otimes 2}$ and the weight $3$ Hodge structure must be
    $M(-1)$. In particular, these pure subquotients do not depend on
    the point $\vv{z}$, but the extension does depend on the point
    (see \cite[\S 9]{BBM}). One can make a guess as to the precise
    nature of this extension, but this seems difficult to verify.
  \item $\kappa > 3$: In this case the coinvariants are equal to the
    conformal blocks. It follows from \cite{TRR} (see also Proposition
    \ref{p:pure} below) that the KZ motives are pure and the only
    nonzero Hodge number is $h^{2,0}$.
  \end{enumerate}
  %\Najmuddin{(Note to self: Add more details and references}
\end{example}
 Further examples of explicit computations of KZ motives will appear in
\cite{BFM}.

\begin{remark} \label{r:conj}%
  It would also be interesting to compute the
  $\mathrm{Gal}(k/\Q)$-conjugates of a KZ motives, in particular their
  Hodge numbers, where the Galois group acts on the coefficient
  field. It is not clear how to do this in general, even in the case
  when coinvariants are equal to conformal blocks.
  %, e.g., in case (4) of Example \ref{e:legendre}.
\end{remark}

\begin{lemma} \label{l:finite}%
  Let $M$ be a (geometric) VHS on an algebraic variety with
  coefficients in a number field $k$. If all the
  $\mathrm{Gal}(k/\Q)$-conjugates of $M$ have no non-vanishing
  adjacent Hodge numbers, i.e., no two numbers of the form $h^{p,q}$
  and $h^{p-1,q+1}$ are simultaneously nonzero, then the monodromy of
  $M$ is finite. If the monodromy of $M$ is irreducible, then it is
  finite if and only if every $\mathrm{Gal}(k/\Q)$-conjugate of $M$
  has a unique non-vanishing Hodge number.
\end{lemma}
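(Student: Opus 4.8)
The plan is to deduce finiteness of monodromy from a rigidity/semisimplicity argument, using the fact that a polarised variation of Hodge structure with ``thin'' Hodge type (no adjacent Hodge numbers) forces the monodromy to preserve a definite Hermitian form after a suitable decomposition, and then to observe that such a VHS, being rigid with finite-order local monodromy, must have finite global monodromy. More precisely, the first step is to recall that a $\mathbb{Z}$- (or $k$-) VHS $M$ on a quasi-projective variety decomposes, after passing to a finite étale cover if necessary, into a direct sum of pieces on which the monodromy is either finite or has infinite image with nontrivial derivative of the period map; by a theorem of the type of Deligne's semisimplicity, the Zariski closure $G$ of the monodromy is a semisimple group, and the period map is an immersion on the locus where $dp \neq 0$. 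The key point is that if the Hodge structure has no two simultaneously nonzero adjacent Hodge numbers $h^{p,q}$, $h^{p-1,q+1}$, then the Griffiths transversality condition forces the derivative of the period map to vanish identically (since the only allowed infinitesimal variation lies in $\bigoplus \mathrm{Hom}(H^{p,q}, H^{p-1,q+1})$, which is zero), so the period map is constant on each connected component, hence the VHS is (after finite cover) constant — and since it was defined over a number field with an integral structure, constant with finite monodromy.

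The subtlety is that the hypothesis is only assumed for all $\mathrm{Gal}(k/\mathbb{Q})$-conjugates of $M$, not just $M$ itself; this is needed because the monodromy group lives over $\mathbb{Q}$ (or $\mathbb{Z}$) even though the Hodge structure is defined over $k$, so one must control all the complex embeddings simultaneously to conclude that the $\mathbb{Q}$-local system underlying $\bigoplus_{\sigma} M^{\sigma}$ has finite monodromy, and hence so does each summand. I would make this precise by the following steps: (1) form the $\mathbb{Q}$-VHS $N = \bigoplus_{\sigma \in \mathrm{Gal}(k/\mathbb{Q})} M^{\sigma}$ (or rather its restriction of scalars), noting its Hodge type is the union of those of the $M^{\sigma}$; (2) apply the Griffiths-transversality observation above to $N$: since no $M^{\sigma}$ has adjacent nonzero Hodge numbers, neither does $N$, so $dp_N \equiv 0$; (3) conclude the monodromy of $N$ is finite (invariance of the Hodge filtration $\Rightarrow$ the local system is a subquotient of a constant one over a finite cover, and finiteness follows since an integral VHS with trivial period map has finite monodromy); (4) finiteness of the monodromy of $N$ implies finiteness for the direct summand $M$.

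For the second assertion (the irreducible case), I would argue both directions. If the monodromy of $M$ is finite then the VHS is, over a finite cover, a constant Hodge structure, which being irreducible as a representation of $\pi_1$ and polarised must be pure of a single Hodge type $h^{p,q}$ with $p+q = w$ the weight — so each conjugate $M^{\sigma}$, being again an irreducible finite-monodromy polarised VHS, likewise has a unique nonzero Hodge number. Conversely, if every $\mathrm{Gal}(k/\mathbb{Q})$-conjugate has a single nonzero Hodge number, then a fortiori no conjugate has adjacent nonzero Hodge numbers, so the first part of the lemma applies and the monodromy is finite. The main obstacle I anticipate is being careful in step (3): the cleanest route is to invoke the theorem that a polarisable VHS with trivial (constant) Hodge filtration underlies a constant local system after finite base change, combined with the fact that a local system underlying an integral structure with semisimple, arithmetic monodromy that is moreover ``rigid'' (trivial period map) has finite image — here one should cite the relevant rigidity statement (e.g. via the fact that the monodromy preserves both the integral lattice and the Hodge decomposition, so lands in a finite group) rather than reprove it, and confirm the quasi-projective (not necessarily proper) base causes no difficulty since $\mathcal{C}_n$ admits a smooth compactification and the VHS is admissible.
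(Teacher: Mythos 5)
Your argument is correct and follows essentially the same route as the paper: both pass to the direct sum $\bigoplus_{\sigma} M^{\sigma}$ of Galois conjugates to obtain a $\Q$-VHS with a $\Z$-structure, use Griffiths transversality together with the absence of adjacent nonzero Hodge numbers to show the Hodge decomposition is flat, and combine the resulting unitarity (coming from the polarisation) with discreteness of the integral monodromy to conclude finiteness. For the irreducible case the paper argues via definiteness of the invariant Hermitian form on an irreducible representation of a finite group (after noting that irreducibility forces the nonzero Hodge numbers to be consecutive), rather than via constancy of the period map as you do, but the content is the same.
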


\begin{proof}
  Let $M'$ be the sum of the $\mathrm{Gal}(k/\Q)$-conjugates of $M$,
  so $M'$ is a $\Q$-VHS and can be given a $\Z$-structure (by
  geometricity). Now the fact that each conjugate has no non-vanishing
  adjacent Hodge numbers (and is preserved by the connection on $M'$)
  implies by Griffiths transversality and polarisability that the
  local system underlying $M'$ has a unitary structure. Since the
  $\Z$-structure implies that the monodromy of $M'$ is discrete, it
  follows that the monodromy of $M'$, therefore also $M$, must be
  finite.

  If the monodromy is irreducible, then by Griffiths transversality
  the non-zero Hodge numbers of any $\mathrm{Gal}(k/\Q)$-conjugate
  must be of the form $h^{p,q}, h^{p-1,q+1},\dots, h^{p-r,q+r}$, for
  some $r \geq 0$. If the monodromy is also finite then $r$ must be
  $0$ since a non-degenerate invariant Hermitian form on an
  irreducible representation of a finite group must be definite.

\end{proof}

\begin{remark} %
  The use of the Galois action to prove finiteness of monodromy of KZ
  equations has also been used in the physics literature (see, e.g.,
  \cite{ST}).
\end{remark}

\begin{example} \label{e:fin} %
  We consider the case (4) of Example \ref{e:legendre} with
  $\kappa = 4, 6$. In both these cases the field $k$ can be taken to
  be an imaginary quadratic field. Since the only nonzero Hodge number
  of the VHS is $h^{2,0}$, the only nonzero Hodge number of its
  complex conjugate is $h^{0,2}$, so by Lemma \ref{l:finite} we
  recover the known fact that the monodromy is finite in these
  cases. The only other case in which the monodromy is finite is
  $\kappa = 10$ \cite{M, LPS})\footnote{This is also proved in
    \cite[Theorem 3.3]{ST} under the assumption that the monodromy is
    irreducible.}; finiteness in this case can also be proved by
  analysing the action of $\mathrm{Gal}(k/\Q)$, but the argument is
  more elaborate.

  \iffalse but here the only proof we know for the
  finiteness (and irreducibility) of the monodromy is by explicit
  computation. However, using the finiteness and irreducibility of the
  monodromy one may deduce that the image under the generator of
  $\mathrm{Gal}(k/\Q)\cong \Z/4\Z$ of dual conformal blocks has only
  one nonvanishing Hodge number, { which is $h^{1,1}$: here finiteness
    and irreducibility imply that there is a unique non-vanishing
    Hodge number and then one uses the fact that the square of a
    generator acts by complex conjugation.  \fi
\end{example}

\begin{question} \label{q:fin}
  Is there an algorithm to determine when the monodromy of
  $\kz(\vv{\lambda},\nu^*)$ is finite?
\end{question}
We ask for an algorithm instead of a classification since the latter
seems like a much more difficult question. Aside from Hodge theoretic
methods as above, one might also try to use more arithmetic methods:
The relevance of Grothendieck's $p$-curvature conjecture to this
question (in the case of conformal blocks) is pointed out in
\cite[Appendix A]{andre}. We also note that the results of
\cite{katz-as} and Propositions \ref{p:kz} and \ref{p:pure} imply that
Grothendieck's conjecture is true for any pure subquotient of
$\kz(\vv{\lambda},\nu^*)$ and also for conformal blocks (for
which see also \cite[Corollary 19.9] {BFS}.

\subsubsection{}
The following is an elaboration of \cite[Question 7.6]{BBM}.
\begin{example}
  Let $\lambda_1,\dots,\lambda_n$ be highest weights for $\sl_2$ with
  $\sum \lambda_i=2m\in 2\mathbb{Z}$, $0<\lambda_i<m$. Consider the
  local system $A(\vv{\lambda})^*$ on $\mathcal{C}_n$ at level
  $\ell=m-2$, so that $\kappa=m$. It has a sublocal system given by
  dual conformal blocks $\mathbb{V}^*$.  The quotient
  $A(\vv{\lambda})^*/\mathbb{V}^*$ on $\mathbb{C}^n$ is essentially
  the Lauricella system considered in \cite{DMo} for the weights
  $\mu_i=\lambda_i/m$: The number of $t$ variables is $m$. The stratum
  when $\ell+1$ of the $t$ points are at infinity detects conformal
  blocks. Therefore we can take residues and the
  $A(\vv{\lambda})^*/\mathbb{V}^*$ maps to the cohomology of an open
  curve, which is the one corresponding to the Lauricella system (more
  details will appear in \cite{BFM}). {We note that the Hodge numbers
    of the associated motives are computed in \cite[Proposition
    2.20]{DMo}.}
\end{example}

\subsection{Motives associated to conformal blocks}

\subsubsection{}
Let $M$ be a pure Nori motive of weight $w$ over $k$ with
$\Q$-coefficients.
\begin{defi} \label{d:pol}%
  A polarisation on $M$ is a morphism of motives
  $\langle\, , \rangle: M \otimes M \to \Q(-w)$ such that each Hodge
  realisation is a polarisation of the corresponding $\Q$-Hodge
  structure (\cite[D\'efinition 2.1.15]{hodge2}).
\end{defi}
We analogously define a polarisation of a pure motivic local
system.

A polarised $\Q$-motive is a $\Q$-motive together with a
polarisation. A polarised (pure) motive with coefficients in $L$, where $L$ is
any extension of $\Q$, is a direct summand of $M \otimes_{\Q} L$ (see
footnote \ref{f:coef}) where $M$ is a polarised $\Q$-motive (and
similarly for motivic local systems).

\subsubsection{}
 Suppose $\lambda_1,\dots,\lambda_n$ and $\nu$ are dominant
  integral weights of $\frg$ of level $\ell$.  Let $\cblock$ be the
fibre of the conformal block bundle
$\mathbb{V}_{n+1,\vv{\lambda},\nu^*,\ell}$ at $\vec{z}$ (i.e., with an
insertion of $\nu^*$ at $\infty\in\Bbb{P}^1$); it is a quotient of
$\mathbb{A}(\vv{\lambda},\nu^*)$.
\begin{proposition} \label{p:pure}%
  Suppose $\kappa=\ell+h^{\vee}, \ell\in \mathbb{Z}_{>0}$. Then, the
  space of dual conformal blocks $\cblock^*$ coincides with the
  subquotient of the image of
  $H^M(V',q_{!}\mathcal{L}(a))^{\chi}\to
  H^M(V,j_{!}\mathcal{L}(a))^{\chi}$ given by the image of
  \begin{equation}\label{overflow2}
  H^M_c(U,\mathcal{L}(a))^{\chi}\to H^{M}(U,\mathcal{L}(a))^{\chi}.
  \end{equation}
  Therefore, dual conformal blocks $\cblock^*$ give a canonical
  (polarised) motivic local system such that the Hodge realisation for
  the inclusion $k \hookrightarrow \mathbb{C}$ is a (polarised) VHS on
  $\mathcal{C}_n$ concentrated in degree $(M,0)$.
\end{proposition}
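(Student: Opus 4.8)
The plan is to identify $\cblock^*$, sitting inside the coinvariant local system $\mathbb{A}(\vv{\lambda},\nu^*)^*$ of Proposition \ref{p:kz}, with the interior cohomology $H^M_!:=\op{im}\big(H^M_c(U,\ml(a))^\chi\to H^M(U,\ml(a))^\chi\big)$, and then to read off the Hodge-theoretic and motivic assertions formally. By Proposition \ref{p:kz} and Claim \ref{inj}, $\mathbb{A}(\vv{\lambda},\nu^*)^*$ is, compatibly with the KZ connection, the image of $\phi\colon H^M(V',q_!\ml(a))^\chi\to H^M(V,j_!\ml(a))^\chi$, a subspace of the target. Since $U\subseteq V\subseteq V'\subseteq P$ and $j_!\ml(a)=(q_!\ml(a))|_V$, restriction produces a commutative diagram joining $H^M_c(U,\ml(a))^\chi$, $H^M(V',q_!\ml(a))^\chi$, $H^M(V,j_!\ml(a))^\chi$ and $H^M(U,\ml(a))^\chi$, in which the natural map $H^M_c(U,\ml(a))^\chi\to H^M(U,\ml(a))^\chi$ factors through $\phi$; hence $\op{im}\big(H^M_c(U,\ml(a))^\chi\to H^M(V,j_!\ml(a))^\chi\big)$ is contained in $\op{im}\phi=\mathbb{A}(\vv{\lambda},\nu^*)^*$, and its image under the restriction $b\colon H^M(V,j_!\ml(a))^\chi\to H^M(U,\ml(a))^\chi$ is $H^M_!$. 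It therefore suffices to show that $b$ carries $\cblock^*\subseteq\mathbb{A}(\vv{\lambda},\nu^*)^*$ isomorphically onto $H^M_!$.

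Injectivity of $b$ on $\cblock^*$ is the statement, already recorded via \cite{TRR,b-unitarity} and Remark \ref{unitar}, that conformal blocks inject into $H^M(U,\ml(a))$. For $b(\cblock^*)\subseteq H^M_!$ one uses that $\kappa=\ell+h^\vee>0$ makes all residues $a_\alpha$ real, so that $\ml(a)$ is a unitary local system and the theorem of Ramadas and of \cite{b-unitarity} (extended to arbitrary dominant $\nu$ as in Remark \ref{unitar}, details in \cite{BFM}) applies: the Schechtman--Varchenko image of a dual conformal block is a holomorphic $M$-form on the cover $\widehat U$ that extends to a smooth projective compactification $\widehat P$, and such a class lies in $\op{im}\big(H^M(\widehat P)\to H^M(\widehat U)\big)=W_M H^M(\widehat U)$, i.e. on the $\chi$-isotypic part in $H^M_!$. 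For the reverse inclusion $H^M_!\subseteq b(\cblock^*)$ I would combine Poincar\'e duality $H^M_c(U,\ml(a))^\vee\cong H^M(U,\ml(-a))$ --- with $\ml(-a)$ corresponding to $-\kappa$ and dual weights --- with the self-duality on $\mathbb{P}^1$ of conformal blocks and of coinvariants: under $\kappa\leftrightarrow-\kappa$, $\vv{\lambda}\leftrightarrow\vv{\lambda}^*$ the space $H^M_!$ is self-dual while the inclusion $\cblock^*\hookrightarrow\mathbb{A}(\vv{\lambda},\nu^*)^*$ becomes dual to the projection $\mathbb{A}(\vv{\lambda},\nu^*)^*\twoheadrightarrow\cblock^*$; matching these representation-theoretic maps with the topological ones (as in \cite[Theorem 7.3]{BBM}) then forces $b(\cblock^*)$ and $H^M_!$ to have equal dimension (the level-$\ell$ Verlinde number), giving equality. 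This is essentially what is proved for $\nu=0$ in \cite{BBM} and extended to general $\nu$ in \cite{BFM}.

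The remaining assertions are then formal. The middle-degree interior cohomology of a unitary local system on a smooth variety is pure of weight $M$ (being a subquotient of both $H^M_c$ and $H^M$, hence of weight $\le M$ and $\ge M$; equivalently it is $IH^M(P,\ml(a))$), and Poincar\'e duality together with the unitary polarisation of $\ml(a)$ equips it with a pairing of motives $\cblock^*\otimes\cblock^*\to k(-M)$ whose Hodge realisations are polarisations; passing to the sum of the $\op{Gal}(k/\Q)$-conjugates realises $\cblock^*$ as a polarised motive with coefficients in $k$ in the sense of Definition \ref{d:pol}. As a subquotient of the KZ motivic local system $\kz(\vv{\lambda},\nu^*)$ --- already an object over $k$ in the Morel--Ivorra category, with Nori-motivic stalks --- $\cblock^*$ thereby inherits the structure of a pure, polarised motivic local system on $\mc_n$. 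Finally, for the embedding $k\hookrightarrow\C$ the Hodge realisation is, by the previous paragraph, spanned by classes of holomorphic $M$-forms, so that $F^M$ is everything; with purity of weight $M$ this leaves $h^{M,0}$ as the only nonzero Hodge number, i.e. the VHS is concentrated in bidegree $(M,0)$.

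The step I expect to be the real obstacle is the topological identification behind the first two paragraphs: making rigorous the comparison between the representation-theoretically defined maps (the Schechtman--Varchenko map, and the identifications behind Proposition \ref{p:kz}) and the topological restriction and forget-supports maps, while tracking the $\chi$-isotypic bookkeeping and the distinct boundary conditions encoded by $V\subseteq V'$ --- this rests on the analytic extension theorem of \cite{TRR,b-unitarity} and on the combinatorial comparison of \cite{BBM} (and its extension \cite{BFM}). The reverse inclusion $H^M_!\subseteq b(\cblock^*)$, which needs the self-duality of conformal blocks and a Verlinde-number dimension count, is the other point requiring care; once these are in place, the Hodge-theoretic and motivic conclusions are routine.
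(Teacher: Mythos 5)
Your overall architecture matches the paper's: the forward inclusion $\cblock^*\hookrightarrow \op{im}\bigl(H^M_c(U,\ml(a))^{\chi}\to H^M(U,\ml(a))^{\chi}\bigr)$ via the extension of the Schechtman--Varchenko correlation forms to smooth projective compactifications of $\widehat{U}$ (\cite{TRR,b-unitarity}, Remark \ref{unitar}), the commutative diagram relating $H^M(V',q_{!}\ml(a))^{\chi}$, $H^M(V,j_{!}\ml(a))^{\chi}$, $H^M_c(U,\ml(a))^{\chi}$ and $H^M(U,\ml(a))^{\chi}$, and the formal deduction of purity, the $(M,0)$ Hodge type and the polarisation from interior cohomology are all as in the paper. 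The gap is in your reverse inclusion. The paper reduces it to showing that the composite $H^M_c(U,\ml(a))^{\chi}\to \mathbb{A}(\vv{\lambda},\nu^*)^*$ lands in $\cblock^*$, dualises this to the statement that $\mathbb{A}(\vv{\lambda},\nu^*)\to H^M(U,\ml(-a))^{\chi}$ factors through the conformal block quotient $\cblock$, and then invokes \cite[Theorem 4.3.1]{FeSV2}: elements of the kernel of $\mathbb{A}(\vv{\lambda},\nu^*)\to\cblock$ map to exact forms on $U$. This exactness theorem is the essential input and is absent from your argument.

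Your proposed substitute --- Poincar\'e duality plus ``self-duality of conformal blocks'' forcing $\dim H^M_{!}=\dim\cblock^*$ --- does not close. Poincar\'e duality only gives $H^M_{!}(a)^\vee\cong H^M_{!}(-a)$, a self-duality of interior cohomology with no control on its dimension, and replacing $\kappa$ by $-\kappa$ leaves the range in which a conformal-block quotient of coinvariants exists, so there is no second injection or surjection to play off against the first; one would still need the composite $\cblock^*\hookrightarrow H^M_{!}(a)\to\cblock^*$ to be an isomorphism, which is exactly the factorisation statement you are missing. Moreover, the equality of $\dim H^M_{!}$ with the Verlinde number is a \emph{consequence} of the proposition (it is the content of the question on page 220 of \cite{FeSV2}, answered by the Corollary that follows), not an available input, so assuming it makes the argument circular. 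The proof is repaired precisely by inserting \cite[Theorem 4.3.1]{FeSV2} at the point you yourself flag as ``requiring care.''
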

\iffalse
\Prakash {\cora I have modified this proof repeating some facts about extensions to compactifications stated earlier.}
\fi
\begin{proof}
There is a commutative diagram:
\begin{equation}\label{commie}
\xymatrix{
\mathbb{A}(\vv{\lambda}^*,\nu)\ar[r] & \mathbb{A}(\vv{\lambda},\nu^*)^*\ar[d]\\
H^M(V',q_{!}\mathcal{L}(a))^{\chi}\ar[r]\ar[u] & H^{M}(V,j_{!}\mathcal{L}(a))^{\chi}\ar[d]\\
H^M_c(U,\ml(a))^{\chi}\ar[u]\ar[r] & H^M(U,\ml(a))^{\chi}.
}
\end{equation}
Since correlation functions for dual conformal blocks $\cblock^*$ (the
image under \eqref{SV_map} of
$\cblock^*\subseteq \mathbb{A}(\vv{\lambda},\nu^*)^*\subseteq
V(\vv{\lambda})^*_{-\nu}$) extend to smooth projective
compactifications of $\widehat{U}$ \cite{b-unitarity} (see Remark
\ref{unitar}), $\cblock^*$ is naturally a subspace of the image
$H^M_c(U,\mathcal{L}(a))^{\chi}\to
H^{M}(U,\mathcal{L}(a))^{\chi}$. (Here we use the fact, which follows
from the long exact sequence of cohomology with compact support in the
category of mixed Hodge structures, that global top degree holomorphic
forms on a smooth proper variety lift to compactly supported
cohomology classes on Zariski open subsets.  Also, top degree
holomorphic differential forms on a smooth projective variety inject
into the cohomology of any non-empty open subset.  This gives that
$\cblock^*$ injects into $H^{M}(U,\mathcal{L}(a))^{\chi}$.)

We only need to show that this image is not any bigger. Using
\eqref{commie}, it suffices to prove that the composite,
\begin{equation}\label{overflow}
H^M_c(U,\mathcal{L}(a))^{\chi}\to \mathbb{A}(\vv{\lambda},\nu^*)^*
\end{equation}
lands inside dual conformal blocks $\cblock^*$.

By duality, this is the same as asking that the map
$\mathbb{A}(\vv{\lambda},\nu^*)\to H^M(U,\ml(-a))^{\chi}$ factors
through the space of conformal blocks $\cblock$. This is proved in
\cite[Theorem 4.3.1]{FeSV2} by showing that elements in the kernel of
the map from $\mathbb{A}(\vv{\lambda},\nu^*)$ to conformal blocks give
rise to exact forms on $U$ %\cite{FeSV2}
(i.e., $d\omega$ where $\omega$ is the master function times a form
with (possibly) non-log poles, see the example on page 287 in
\cite{VBook}.)

A canonical polarisation on the image of
$H^M_c(U,\mathcal{L}(a))^{\chi}\to H^{M}(U,\mathcal{L}(a))^{\chi}$ is
inherited from the cohomology of a smooth projective compactifcation
of $\widehat{U}$. This polarisation is independent of the
compactification because conformal blocks are of type $(M,0)$ on the
compactification, and the corresponding polarisation comes from
integrals over $\widehat{U}$.
\end{proof}

Proposition \ref{p:pure} answers \cite[Question 6.2]{LV} in the
affirmative. It also shows that \cite[Theorem 1.1]{BMu} holds without
restrictions on the simple Lie algebra $\frg$ by an essentially
different argument: the space of all suitably invariant top degree
holomorphic differential forms on any smooth compactification $Y$ of
$\widehat{U}$ (as in Remark \ref{unitar}) is naturally isomorphic to
the space of dual conformal blocks $\cblock^*$.

\begin{corollary}
 The  image of \eqref{overflow} is exactly the space of dual conformal blocks $\cblock^*$. Dualising, we obtain an injective map $\cblock\to H^M(U,\ml(-a))^{\chi}$. The image of this map is the image of
 $H^M_c(U,\ml(-a))^{\chi}\to H^M(U,\ml(-a))^{\chi}$.
\end{corollary}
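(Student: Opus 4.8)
The plan is to deduce the corollary from Proposition \ref{p:pure} and its proof by a formal duality argument, with no further geometric input. Write $\phi$ for the map \eqref{overflow}, $H^M_c(U,\ml(a))^\chi\to\mathbb{A}(\vv{\lambda},\nu^*)^*$, and $c_a$ for the forget-supports map $H^M_c(U,\ml(a))^\chi\to H^M(U,\ml(a))^\chi$ of \eqref{overflow2} (and similarly $c_{-a}$ for its analogue with $\ml(a)$ replaced by $\ml(-a)$). The proof of Proposition \ref{p:pure} already gives $\operatorname{im}\phi\subseteq\cblock^*$, so to get the first assertion I only need surjectivity of $\phi$ onto $\cblock^*$. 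By that same proposition, $\cblock^*$ is identified with $\operatorname{im}c_a$ through the injection $\cblock^*\hookrightarrow H^M(U,\ml(a))^\chi$ obtained by restricting the right-hand vertical composite of \eqref{commie}; and commutativity of the two squares in \eqref{commie} shows that this right-hand composite, precomposed with $\phi$, equals $c_a$. Since $\cblock^*\hookrightarrow H^M(U,\ml(a))^\chi$ is injective and $c_a$ surjects onto $\cblock^*$, the corestriction of $\phi$ to $\cblock^*$ must be surjective, i.e. $\operatorname{im}\phi=\cblock^*$. In particular $\phi$ factors as a surjection $H^M_c(U,\ml(a))^\chi\twoheadrightarrow\cblock^*$ followed by $\cblock^*\subseteq\mathbb{A}(\vv{\lambda},\nu^*)^*$, and $\ker\phi=\ker c_a$.

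For the remaining assertions I would invoke Poincaré duality for the rank one local system $\ml(a)$ on the smooth affine $M$-fold $U$: the cup-product pairing $H^M_c(U,\ml(a))\otimes H^M(U,\ml(-a))\to\C$ is perfect (both groups sit in the single nonvanishing degree by Artin vanishing) and $\Sigma_M$-invariant, since colour-preserving permutations of the $t_b$ are holomorphic, hence orientation preserving, and carry the identification $\ml(a)^\vee\cong\ml(-a)$ to itself; as $\chi$ is the sign character, of order dividing two, taking $\chi$-isotypic parts commutes with dualising, so $\bigl(H^M_c(U,\ml(a))^\chi\bigr)^\vee\cong H^M(U,\ml(-a))^\chi$, and by functoriality of Poincaré duality the transpose of $c_a$ is $c_{-a}$. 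Dualising the surjection $H^M_c(U,\ml(a))^\chi\twoheadrightarrow\cblock^*$ of the previous paragraph then produces the asserted injective map $\cblock\hookrightarrow H^M(U,\ml(-a))^\chi$ — it is the transpose of $\phi$, which factors through $\cblock$ by the first paragraph — and its image is the annihilator of $\ker\phi=\ker c_a$, which is precisely $\operatorname{im}(c_a^\vee)=\operatorname{im}c_{-a}$, giving the last assertion.

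The argument is essentially formal once Proposition \ref{p:pure} is available, so I do not expect a genuine obstacle, only two bookkeeping points. First, one must know that the two squares of \eqref{commie} commute against the maps used above; but these are the same commutativities already established in \S\ref{parallel} and in the proof of Proposition \ref{p:pure}, resting on \cite{L2,BBM,FeSV2}. Second, one must track the Poincaré-duality identifications carefully: that $\ml(a)^\vee\cong\ml(-a)$ (equivalently, $\eta_a$ changes sign under $a\mapsto-a$), that the pairing is $\Sigma_M$-invariant so the $\chi$-isotypic projections dualise correctly, and that the transpose of a forget-supports map is again a forget-supports map. An alternative that avoids naming $c_a$ is to dualise the entire diagram \eqref{commie}, equivalently to run Proposition \ref{p:pure} with $-\kappa$ and the dual weights $\vv{\lambda}^*,\nu$ as in Looijenga's set-up in \cite{L2}, and read both statements off directly; that is likely the cleanest write-up.
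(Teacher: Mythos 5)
Your argument is correct and follows essentially the same route as the paper: the first assertion is the diagram chase through \eqref{commie} combined with Proposition \ref{p:pure}'s identification of $\cblock^*$ with the image of \eqref{overflow2}, and the remaining assertions come from dualising the factorisation $H^M_c(U,\ml(a))^{\chi}\twoheadrightarrow \cblock^*\hookrightarrow H^M(U,\ml(a))^{\chi}$ via Poincar\'e duality, exactly as in the paper's (terser) proof. One minor remark: the parenthetical appeal to Artin vanishing is unnecessary (and not guaranteed for these weights); perfectness of the pairing between $H^M_c(U,\ml(a))$ and $H^M(U,\ml(-a))$ already holds because $M$ and $M$ are complementary degrees on the smooth $M$-fold $U$.
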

This affirmatively answers  the question following the theorem on page 220 of
\cite{FeSV2}.
\begin{proof}
 Proposition \ref{p:pure} gives  $H^M_c(U,\ml(a))\twoheadrightarrow \cblock^* \hookrightarrow H^M(U,\ml(a))$, we dualise to get
$$H^M_c(U,\ml(-a))\twoheadrightarrow \cblock \hookrightarrow H^M(U,\ml(-a)),$$
 which shows $\cblock \to H^M(U,\ml(-a))$ is injective, and also identifies its image.

 \iffalse
 Let $\alpha\in \cblock^*$. By the extensions of correlation functions of dual conformal blocks to projective compactifications \cite{b-unitarity}, $\alpha$ gives rise to an element of $H^M_c(U,\ml(a))$. The image of this element under \eqref{overflow}, which is in $\cblock^*$ by the argument  in the proof of Proposition \ref{p:pure}, coincides with  $\alpha$: Top degree holomorphic differential forms on a smooth projective variety inject
into the cohomology of any non-empty open subset. Therefore to prove this coincidence  we may compare the images in $H^M(U,\ml(a))$ which are equal from \eqref{commie}.

Dualizing $H^M_c(U,\ml(a))\twoheadrightarrow \cblock^* \hookrightarrow H^M(U,\ml(a))$, we get
$H^M_c(U,\ml(-a))\twoheadrightarrow \cblock \hookrightarrow H^M(U,\ml(-a))$, which identifies the image of $\cblock \hookrightarrow H^M(U,\ml(-a))$.
\fi
\end{proof}

\subsubsection{}

It is shown in \cite[Theorem 7.2.2] {BA} that (irreducible) unitary rigid
local systems (in the sense of \cite{Katz}) on $\mathbb{P}^1-S$, $S$
a finite subset, with finite local monodromies  arise
essentially as solutions of KZ equations on conformal blocks for
special linear groups (one fixes all but one of the $z_i$).  There are
examples of unitary rigid (irreducible) local systems with infinite
monodromy (see, e.g., \cite{BH}), and therefore the corresponding KZ
connections on conformal blocks have infinite monodromy. We note that
examples of TQFT representations, which are equivalent to the
monodromy of WZW/KZ equations on conformal blocks, with infinite monodromy
were first given in \cite{M}.

\begin{question}
  Do all rigid local systems on $\mathbb{P}^1-S$, $S$ a finite subset,
  with quasi-unipotent local monodromies arise as subquotients of KZ
  equations on spaces of coinvariants (allowing negative, or rational
  values for $\kappa$)?
\end{question}

\begin{remark} \label{r:red}%
  The monodromy representations associated to conformal blocks need
  not always be irreducible and it would be very interesting to have a
  criterion for irreducibility. Several reducible examples are given in
  \cite[\S 3.3]{ST}, in particular, it is shown that the monodromy
  representation on conformal blocks for $\sl_2$ with $n= 4$,
  $\ell = 10$ and all $\lambda _i = \lambda$, is reducible when
  $\lambda = 4\varpi$ or $6 \varpi$ (with $\varpi$ the fundamental
  weight).
\end{remark}

\subsubsection{}

Another question relates to strange duality isomorphisms. These relate
conformal blocks for one $\frg$ with another simple Lie algebra
$\frg'$ (see e.g., Mukhopadhyay's appendix in \cite{KBook} for the
background) with these isomorphisms being flat for the KZ
connection. Since we have motivic realizations for conformal blocks,
we may ask

\begin{question}\label{subtle}
  Are the isomorphisms arising from strange dualities isomorphisms of
  polarised motives?

  Roughly speaking we are asking if the strange duality maps arise
  from algebro-geometric maps or correspondences.  In particular, this
  would imply that they preserve the unitary metrics coming from
  Proposition \ref{p:pure}.  This compatibility of metrics was also
  raised as a question by Minwalla to the first named author.
\end{question}

\iffalse
\begin{question}\label{subtle}
  Are strange dualities motivic? Roughly speaking we are asking if the
  strange duality maps arise from topological maps or correspondences.

  {\cora In particular, are  isomorphisms arising from strange dualities isomorphisms
  of polarised motives? This would imply that they preserve the unitary metrics coming
  from Proposition \ref{l:pure}.   This compatibility of metrics was also raised as question
  by Minwalla to the first named author.}

\end{question}

% \Najmuddin{\cb I defined polarised motives only as objects and not as a
%   category, but the definition of isomorphism is clear. Maybe it's OK
%   to be a bit vague here; also if we only talk about ``strange duality
%   isomorphisms'' then I think we can ask whether they are isomorphisms
%   of polarised motvies.}
\Prakash{\cora OK now?}
\Najmuddin I modified the question a bit: the ``In particular''
  was in fact a more precise version of the question so I changed the
  question.}
\fi

Note that strange dualities arise from conformal embeddings
$\frg_1\to\frg_2$ (e.g., for the well studied type A case the
embedding is the tensor product $\sl_m\oplus\sl_n\to \sl_{mn}$). One
then restricts suitably to get dual conformal blocks for $\frg_2$ at
level one (which are one dimensional) to map to dual conformal blocks
for $\frg_1$, with the map projectively flat for KZ connections. We
are asking if this map is motivic. In the type A case considered above
conformal blocks for $\frg_1$ break up as a tensor product of $\sl_m$
blocks at level $n$ and $\sl_n$ blocks at level $m$ and the one
dimensional $\frg_2$ conformal blocks give a perfect duality
(therefore the duality involves tensoring with the one dimensional
motive for the $\frg_2$ blocks).

Question \ref{subtle} is subtle because the number of variables $M$
for $\frg_1$ and $\frg_2$ are in general different for a conformal
embedding. { Also, since the $\kappa$ corresponding to
  $\mathfrak{g}_1$ and $\mathfrak{g}_2$ is in general not the same,
  one might need to enlarge the coefficient field as well as the base
  field in order for the strange duality isomorphisms to be defined
  motivically.}

\begin{example}\label{newexample}
Consider a 2 dimensional complex vector space $V$, and the map of groups $\SL(V)\times \SL(V)\to \SL(V\tensor V)$, i.e., a map $\SL_2\times \SL_2\to \SL_4$. The standard representation $W$ of $\SL_4$ restricts to the tensor product of standard representations. Therefore we have a map
$$(V^{\tensor 4})_{\sl_2}\tensor (V^{\tensor 4})_{\sl_2}\to (W^{\tensor 4})_{\sl_4}$$
which gives a map of constant vector bundles on $\mathcal{C}_4$. The
coinvariants are conformal blocks, at level $1$ for $\SL_4$ and level
$2$ for the $\sl_2$ factors.
%(in these cases coinvariants equal conformal blocks).
Dualizing, we get  a map $$(W^{\tensor 4})^*_{\sl_4}\to (V^{\tensor 4})^*_{\sl_2}\tensor (V^{\tensor 4})^*_{\sl_2}.$$

This is an example of strange duality: The rank of
$(W^{\tensor 4})_{\sl_4}$ is one and the map is projectively flat for
connections. The (central) monodromy % at $\infty$
as any of the $4$ points go around infinity can be computed: the
exponents are $2c(\omega_1)/(2+2)$ on the $\sl_2$ side which equals
$c(\omega_1)/5$ on the $\sl_4$ side, so perhaps the map is actually
flat.

For the motive associated to the $\sl_2$ conformal blocks we have two
$t$ points each, so the tensor product gives us four points. For
$\sl_4$ we have $4L_1=3\alpha_1+2\alpha_2+\alpha_3$, so $6$
points. Naively, we have the type $(6,0)$ cohomology of a
$6$-dimensional variety mapping to the type $(4,0)$ cohomology of a
$4$-dimensional one. Is there a motivic mechanism for such a map?
\end{example}

\iffalse
\Prakash{\cora The question on whether the metric is preserved projectively in the $sl(m)$, $sl(n)$ strange duality
seems to be of a different nature. In the above example the hermitian metrics on the two $sl(2)$ blocks are certainly compatible because the rank $2$ local system is rigid, and so the hermitian metric is unique up to a scalar. So in the above example the metrics are compatible. Should we state the metric problem separately - with a vague reference to Shiraz?
}

\Najmuddin{ I wouldn't say that it is of a different nature but as
  you say it is trivially true when the metric is unique. We could add
  a remark saying that a consequence of a positive answer to the
  question would be the preservation of the metric but the latter
  could be much easier to check in some cases. I am fine with saying
  anything you like about S (since I had no interaction with him); it
  could be something like ``S suggested that this should be true
  because of some reasons coming from physics.''.}
\fi

\subsubsection{}

The center $Z(G)$ of the simply connected form $G$ of $\frg$ acts on
the set of level $\ell$ weights of $\frg$ \cite{FS}. Denote this
action by $\lambda\mapsto c\lambda$ with $c\in Z(G)$. Suppose we have
elements $c_1,\dots,c_n\in Z(G)$ with $c_1c_2\dots c_n=1$, then there
is an isomorphism (\cite{FS}, see also \cite{SM}) of conformal blocks
bundles (preserving projective KZ connections) at level $\ell$ on
$\mathcal{C}_n$ with insertions $(\lambda_1,\dots,\lambda_n;0)$ ($0$
insertion at infinity), and insertions
$(c_1\lambda_1,\dots,c_n\lambda_n;0)$.

\begin{question}
  Are the isomorphisms of conformal blocks bundles arising from the
  action of the center of $G$ isomorphisms of polarised motives, perhaps after
  twisting by a rank one motive?

  (We need to allow twisting by rank one motives since the number of
  $t$ variables, therefore the Hodge numbers, can be different.)

\end{question}

\subsubsection{}

In higher genus, there is a projective WZW/Hitchin connection on the
bundle of conformal blocks $\mathbb{V}_{g,n,\vv{\lambda},\ell}$ on
${M}_{g,n}$ (but there is no analogue of the surjection from the
larger space of coinvariants). After pulling back
$\mathbb{V}_{g,n,\vv{\lambda},\ell}$ to a suitable cover of ${M}_{g,n}$,
one can get an actual connection and then one asks:
\begin{question}
  Is there a motivic realization of the WZW/Hitchin connection in
  higher genus as a variation of pure motives (with coefficients in a
  cyclotomic field) of type $(M,0)$ (so that one gets a unitary
  connection)?
\end{question}

In \cite[Corollary 19.9] {BFS}, using results of \cite{Arkh}, it is
shown that the local system of conformal blocks is a subquotient of a
motivic local system which is pure,  hence semisimple. One would also
 like to show the  unitarity of the local systems in a geometric way as in
 the genus zero case \cite{TRR,b-unitarity}.

\begin{remark}
  In \cite{BFS} conformal blocks are defined using the category of
  factorizable sheaves $\mathcal{F}\mathcal{S}$, which is equivalent
  to a category $\mathcal{C}$ constructed using Lusztig's small
  quantum group (see sections I.10, and V.19 of \cite{BFS}). The
  ribbon structure on the category $\mathcal{C}$ gives conformal
  blocks defined this way the structure of a local system on moduli of
  pointed curves with formal coordinates. The equivalence of this
  definition with that of conformal blocks from the WZW model follows
  from results of \cite[Section IV.9.2]{BFS}. Perhaps one can think of
  the above question as the de Rham counterpart of the Betti picture
  of \cite{BFS}.

  By \cite[Theorem 5.3]{Arkh}, conformal blocks defined using
  $\mathcal{C}$ inject into a semi-infinite Tor (an object in
  $\mathcal{C}$) which is shown to arise as a geometric local system
  \cite[Theorem IV.8.11]{BFS}. The image of the injection of conformal
  blocks into the semi-infinite Tor can perhaps be characterized in
  $\mathcal{C}$ (see \cite[I.10.1]{BFS}) as the image of a map in
  $\mathcal{C}$ but it is not immediately clear why the corresponding
  local system, and the resulting maps come from topology (somewhat in
  parallel to Section \ref{parallel}).
\end{remark}

\printbibliography

%\bibliographystyle{alpha}
% \bibliography{biblio.bib}

\vspace{0.05 in}

%\newpage
\noindent
P.B.: Department of Mathematics, University of North Carolina, Chapel
Hill, NC 27599, USA\\
{{email: belkale@email.unc.edu}}

\vspace{0.08 cm}

\noindent
N.F.: School of Mathematics, Tata Institute of Fundamental Research, Homi Bhabha Road, Mumbai 400005, India\\
{{email: naf@math.tifr.res.in}}
\vspace{0.08 cm}

\end{document}